\newtheorem{theorem}{Theorem}[section]
\newtheorem{corollary}[theorem]{Corollary}
\newtheorem{proposition}[theorem]{Proposition}
\theoremstyle{remark}
\newtheorem{remark}[theorem]{Remark}
\numberwithin{equation}{section}
\numberwithin{theorem}{section}
\numberwithin{figure}{section}
\newcommand{\ud }{\,{\rm d}}
\newcommand{\bbfR}{{\mathbb R}}
\newcommand{\vf}{{\varphi}}
\newcommand{\ve}{{\varepsilon }}
\newcommand{\se}{{\text{\rm{e}}}}
\newcommand{\R}{\mathbb{R}}
\newcommand{\un}{\mathbf{1}\!\!{\rm I}} 
\newcommand{\be}{\begin{equation}} 
\newcommand{\ee}{\end{equation}}
\newcommand{\bea}{\begin{eqnarray}} 
\newcommand{\eea}{\end{eqnarray}}
\newcommand{\bean}{\begin{eqnarray*}} 
\newcommand{\eean}{\end{eqnarray*}}
\newcommand{\rf}[1]{(\ref {#1})}
\def\dy{\,{\rm d}y}
\def\dta{\,{\rm d}\tau}
\def\e{\varepsilon}
\def\s{\sigma}
\def\g{\gamma}
\def\xn{|\!|\!|}
\def\mn{|\!\!|}
\def\mn2{|\!\!|_{M^{d(p-1)/2}}}
\def\a{\alpha}
\def\ap{\frac{\a}{p-1}}
\def\ts{\tilde s}
\title[Nonlocal nonlinear heat equation]{Around  a singular solution of a nonlocal \\ nonlinear heat equation}
\author[P. Biler]{Piotr Biler}
\address{\small Instytut Matematyczny, Uniwersytet Wroc\l awski,
 pl. Grunwaldzki 2/4, \hbox{50-384} Wroc\-\l aw, Poland}
\email{Piotr.Biler@math.uni.wroc.pl}
\author[D. Pilarczyk]{Dominika Pilarczyk}
\address{\small Wydzia{\l} Matematyki, Politechnika Wroc{\l}awska,
Wybrze\. ze Wyspia\'nskiego 27, 50-370 Wroc{\l}aw, Poland} 
\email{Dominika.Pilarczyk@pwr.edu.pl}
\begin{document}

\begin{abstract}
We study the existence of global-in-time solutions for a nonlinear heat equation with nonlocal diffusion, power nonlinearity and suitably small data (either  compared pointwisely to the singular solution or  in the norm of a critical Morrey space). 
Then, asymptotics of subcritical solutions is determined. 
These results are compared with conditions on the initial data leading to a finite time blowup. 
\end{abstract}

\date{\today}

\subjclass[2010]{ 35K55, 35B05, 35B40, 60J60}  

\keywords{fractional Laplacian; nonlinear heat equation; singular solution;  global-in-time solutions;  singular potential; asymptotic behavior; stability}

\thanks{The authors thank Grzegorz Karch and Philippe Souplet for interesting discussions. 
The first author has been partially  supported by the NCN grant {2016/23/B/ST1/00434}. }

\maketitle

\section{Introduction and main results}

\setcounter{equation}{0}

Nonlinear evolution problems involving fractional Laplacian describing the anomalous diffusion (or the $\alpha $-stable L\'evy diffusion) have been extensively studied in the mathematical and physical literature, see \cite{Su} for the Cauchy problem \rf{f_lap}--\rf{u_0}, and \cite{BFW,BKW1,BKW2,BIK} for other examples of problems and for extensive list of references. 
One of these models is the following initial value problem for the reaction-diffusion equation with the anomalous diffusion
\begin{align}
   \label{f_lap} u_t &= - (-\Delta )^{\nicefrac{\alpha }{2}} u+ |u|^{p-1}u , \qquad \bbfR^d \times (0, \infty ),\\
   \label{u_0} u(x,0)&=u_0(x),
\end{align}
where the pseudodifferential operator $(- \Delta )^{\nicefrac{\alpha }{2}}$ with $0< \alpha \leqslant 2$ is defined by the Fourier transformation: $\widehat{(-\Delta )^{\nicefrac{\alpha }{2}} }(\xi ) = | \xi |^\alpha \widehat{u}(\xi )$. We will  use the following well-known integral definition of the fractional Laplacian with $\alpha\in(0,2)$, see e.g. \cite[formula (1.4)]{BKZ2},  
\be
-(-\Delta)^{\nicefrac{\alpha}{2}}f(x)={\mathcal A} 
\lim_{\delta\searrow0}\int_{\{|y|>\delta\}}\frac{f(x-y)-f(x)}{|y|^{d+\alpha}}\dy,\label{fr-lap}
\ee   
where the constant is, by e.g. \cite[{formula} (1.5)]{BKZ2},   
\be
{\mathcal A}={\mathcal A}({d,\alpha})=\frac{2^\alpha\Gamma\left(\frac{d+\alpha}{2}\right)}{\pi^{\frac{d}{2}}\left|\Gamma\left(-\frac{\alpha}{2}\right)\right|}.\label{stala-A}
\ee  
Moreover, we assume that $p>1$ and $u_0(x)\geqslant 0$.\\

This is a straightforward generalization of extensively studied classical nonlinear heat equation, see \cite{QS}, to the case of nonlocal but linear diffusion operators defined by fractional powers of Laplacian. 
Study of solutions around a critical singular solution which is not smoothed out by the diffusion is, in a sense, parallel to analyses for nonlinear heat equation in \cite{QS} and, in particular, in \cite{Pi1,Pi2}. 
This also  reveals some threshold phenomena as is in the former case.


\subsection*{Statement of results}

The form of singular steady state $u_\infty$ of equation \rf{f_lap} is given in Proposition \ref{singC}. 
Existence of global-in-time solutions for initial data in a suitable critical Morrey space is shown in Proposition \ref{glo1}. 
Local-in-time existence of solutions for initial data having local singularities weaker than $u_\infty$ is proved in Proposition \ref{MsMts}. 

Global-in-time existence of solutions for subcritical initial data $u_0\leqslant u_\infty$ is derived in Theorem \ref{glo2}, and this construction is based on a novel comparison principle for special solutions in Proposition \ref{glo3}. 
These solutions have diffusion dominated asymptotics.

Analysis of the linearization operator (nonlocal diffusion $+$ Hardy-type potential) around the singular state in Section \ref{asym} leads to its nonlinear $L^2$-asymptotic stability. Fine asymptotics of solutions in vicinity of $u_\infty$ is given. 

Finally, we interpret results on finite time blowup of solutions with ``large'' initial data (in \cite{Su} and recently \cite{B-bl-a}) as a kind of dichotomy, see Corollary \ref{dich}.

Concerning the question of nonexistence of global-in-time solutions to equation \rf{f_lap}, the first results have been proved in \cite{Su} with the argument based on the seminal idea of \cite{Fu}.
Extensions of such blowup results for more general equations with linear but nonlocal diffusion operators more general than fractional Laplacians and localized source terms are in a forthcoming paper \cite{B-bl-a} which improves some results in \cite{A} and clarifies sufficient conditions for blowup. 
Interpretations of sufficient blowup conditions in \cite{B-bl-a} lead to Corollary \ref{dich} which shows that the discrepancy bounds between sufficient bounds for global-in-time existence and sufficient bounds for blowup (for the same quantity like the Morrey space $M^{d(p-1)/\alpha}(\R^d)$ norm) are well controlled for large dimensions $d$.

 \subsection*{Notation}  
In the sequel, $\| \cdot \|_p$ denotes the usual $L^p(\mathbb R^d)$ norm, and $C$'s are generic constants independent of $t$, $u$,  ...   which may, however, vary from line to line. 
The (homogeneous) Morrey spaces over $\R^d$ modeled on $L^q(\R^d)$, $q\geqslant 1$, are defined by their norms 
\be
|\!\!| u|\!\!|_{M^s_q}\equiv \left(\sup_{R>0,\, x\in\R^d}R^{d(q/s-1)} 
\int_{\{|y-x|<R\}}|u(y)|^q\dy\right)^{1/q}<\infty.\label{hMor}
\ee  

\noindent 
\emph{Caution:} the notation for Morrey spaces used elsewhere might be different, e.g. $M^s_q$ is  denoted by $M^{q,\lambda}$ with $\lambda=dq/s$ in \cite{S}. 

The most frequent situation is when $q=1$ and we consider $M^s_1\equiv M^s$. The spaces  $M^{d(p-1)/\a}(\R^d)$ and more general $M^{d(p-1)/\a}_q(\R^d)$, $q>1$, are  critical in the study of equation \rf{f_lap}, see \cite{S}.  We refer the readers to \cite{B-SM,BZ-2,Lem} for analogous examples in chemotaxis theory.

 Integrals with no integration limits are meant to be calculated over the whole space $\R^d$. 
The asymptotic relation $f\approx g$ means that $\lim_{s\to\infty}\frac{f(s)}{g(s)}=1$ 
and $f\asymp g$ is used whenever $\lim_{s\to\infty}\frac{f(s)}{g(s)}\in (0,\infty)$.  
\section{Existence of global-in-time solutions below the singular solution}\label{s-glo}
\setcounter{equation}{0}

\subsection{Existence of the singular solution} 
We have the following 
\begin{proposition}\label{singC}
For $p>1+\frac{\alpha}{d-\alpha}$, there is a unique radial homogeneous nonnegative solution of the equation 
\be
(-\Delta)^{\nicefrac{\alpha}{2}}u_\infty=u_\infty^p,\ \ {\rm with}\ \ u_\infty(x)=s(\a,d,p)|x|^{-\frac{\a}{p-1}}\label{uC}
\ee 
with the constant 
\be
s(\a,d,p)= \left(
\frac{2^\a}{\Gamma\left(\frac{\a}{2(p-1)}\right)} 
 \frac{\Gamma\left(\frac{d}{2}-\frac{\a}{2(p-1)}\right) \Gamma\left(\frac{p\a}{2(p-1)}\right)}{\Gamma\left(\frac{d}{2}-\frac{p\a}{2(p-1)}\right)} 
\right)^{\frac{1}{p-1}}.\label{sC}
\ee
\end{proposition}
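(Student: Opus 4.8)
The plan is to reduce the whole statement to one explicit computation: the action of $(-\Delta)^{\nicefrac{\alpha}{2}}$ on a pure power $|x|^{-\beta}$. Since we only look for radial, homogeneous, nonnegative solutions, any candidate is forced to have the form $u_\infty(x)=s|x|^{-\beta}$ with $s\geqslant0$ and $\beta>0$ (radial $+$ homogeneous forces the power form). Hence, once the exact constant $\lambda(\beta)$ in the identity $(-\Delta)^{\nicefrac{\alpha}{2}}|x|^{-\beta}=\lambda(\beta)|x|^{-\beta-\alpha}$ is known, the existence, the value of the exponent, the value of $s$, and uniqueness all follow by matching homogeneity degrees and coefficients.

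First I would establish the power formula
\be
(-\Delta)^{\nicefrac{\alpha}{2}}|x|^{-\beta}
= 2^\alpha\,
\frac{\Gamma\!\left(\frac{\beta+\alpha}{2}\right)\Gamma\!\left(\frac{d-\beta}{2}\right)}
{\Gamma\!\left(\frac{\beta}{2}\right)\Gamma\!\left(\frac{d-\beta-\alpha}{2}\right)}\,
|x|^{-\beta-\alpha},
\ee
valid for $0<\beta<d-\alpha$. The cleanest route is through the Fourier transform: $|x|^{-\beta}$ is a homogeneous tempered distribution with $\widehat{|x|^{-\beta}}=c(\beta)|\xi|^{\beta-d}$, where $c(\beta)=2^{d-\beta}\pi^{d/2}\Gamma(\frac{d-\beta}{2})/\Gamma(\frac{\beta}{2})$. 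Multiplying by the symbol $|\xi|^\alpha$ and inverting produces $|x|^{-\beta-\alpha}$ times the ratio $c(\beta)/c(\beta+\alpha)$, and all convention-dependent factors ($\pi^{d/2}$, the transform normalization) cancel in that ratio, leaving exactly the displayed Gamma quotient. Equivalently, one reads it off the semigroup property of Riesz potentials, writing $|x|^{-\beta}$ as a Riesz kernel $I_{d-\beta}$ and using that $(-\Delta)^{\nicefrac{\alpha}{2}}$ sends $I_\sigma$ to $I_{\sigma-\alpha}$; here the constraint $\beta+\alpha<d$ is precisely what keeps the output a genuine locally integrable kernel.

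Granting the power formula, comparing homogeneity in $(-\Delta)^{\nicefrac{\alpha}{2}}u_\infty=u_\infty^p$ pits $|x|^{-\beta-\alpha}$ on the left against $|x|^{-p\beta}$ on the right, forcing $p\beta=\beta+\alpha$, i.e. $\beta=\ap$, the exponent in \rf{uC}. Matching constants gives $s^{p-1}=\lambda(\beta)$, hence $s=\lambda(\beta)^{1/(p-1)}$; substituting $\beta=\ap$ turns the four Gamma arguments into $\frac{\alpha}{2(p-1)}$, $\frac{p\alpha}{2(p-1)}$, $\frac{d}{2}-\frac{\alpha}{2(p-1)}$ and $\frac{d}{2}-\frac{p\alpha}{2(p-1)}$, reproducing \rf{sC} verbatim. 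Uniqueness in the stated class is then automatic, since $\beta$ and the positive root $s$ are each pinned down. Finally I would observe that the hypothesis $p>1+\frac{\alpha}{d-\alpha}$ rearranges to $d(p-1)>p\alpha$, i.e. $\frac{d}{2}-\frac{p\alpha}{2(p-1)}>0$, which is the same as $\beta+\alpha<d$: this makes the denominator factor $\Gamma(\frac{d}{2}-\frac{p\alpha}{2(p-1)})$ have positive argument (all other arguments are then automatically positive), so $\lambda(\beta)$ is finite, positive, and $s$ is a well-defined positive real.

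I expect the only real obstacle to be the bookkeeping in the power formula — fixing the exact constant $\lambda(\beta)$ through the normalization of the Fourier transform (or of the kernels $I_\sigma$) and justifying the computation distributionally on the range $0<\beta<d-\alpha$. Everything after that is elementary algebra with Gamma functions.
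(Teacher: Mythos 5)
Your proposal is correct and takes essentially the same route as the paper: both arguments reduce everything to the power identity
$(-\Delta)^{\nicefrac{\alpha}{2}}\left(|x|^{-\gamma}\right)
=2^\alpha\,\frac{\Gamma\left(\frac{d-\gamma}{2}\right)\Gamma\left(\frac{\alpha+\gamma}{2}\right)}
{\Gamma\left(\frac{d-\alpha-\gamma}{2}\right)\Gamma\left(\frac{\gamma}{2}\right)}\,|x|^{-\alpha-\gamma}$
(which the paper quotes from the convolution identities of \cite{BKZ2}, cf. \rf{lap-alpha-gamma}, and which you rederive via the Fourier transform/Riesz potentials), then fix $\beta=\frac{\alpha}{p-1}$ by matching homogeneity and fix $s$ by matching coefficients. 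Your closing remark that the hypothesis $p>1+\frac{\alpha}{d-\alpha}$ is precisely the condition $\beta+\alpha<d$ making all Gamma arguments positive is exactly how the paper uses this assumption as well.
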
 

\begin{proof}
By formula \rf{fr-lap} (see also \cite[(3.6)]{BKZ2}) for the fractional Laplacian, we have  for $d>\frac{p}{p-1}\a$, i.e. $p>1+\frac\a{d-\alpha}$
$$
\frac{\a}{p-1}+2\frac\a{2}=p\frac{\a}{p-1}.
$$ 
Thus, a multiple of the function $|x|^{-\a/(p-1)}$ is a solution of equation \rf{uC}. 
The value of $s(\a,d,p)$ is determined from the convolution identities \cite[(3.3)]{BKZ2}, see also formula \rf{lap-alpha-gamma}.  
The solution $u_\infty$ satisfies  $|\!\!| u_\infty|\!\!|_{M^{d(p-1)/\a}}=\frac{\s_d}{d-\frac{\a}{p-1}}  s(\a,d,p)$. 
\end{proof}

\begin{remark}
If $\a=2$ it is known that the exponent $p_{\rm sg}=1+\frac{2}{d-2}$ is critical for the existence of a positive, radial, distributional solution $u_\infty(x) = \left(\frac{2}{p-1}\left(d-2-\frac{2}{p-1}\right)\right)^{\frac{1}{p-1}}|x|^{-\frac{2}{p-1}}$ of the equation $\Delta u+u^p=0$ in $\bbfR^d$ (see for example \cite{QS} and references therein). In the case $\a\in (0,2)$ the analogue of this exponent is $1+\frac{\a}{d-\a}$, see Remark \ref{JL-exp}. 
For some smooth solutions of equation \rf{uC}, see \cite{BMW}. 

\end{remark}

\subsection{Small global-in-time solutions} 

Our  purpose  is to  prove two  global in time existence results, the first one  under smallness condition of the norm of critical homogeneous Morrey space $M^{d(p-1)/\a}_q(\R^d)$ with some $q>1$, and the second under the assumption that the initial data $u_0$ is below the singular solution $u_\infty$ (pointwisely). 

\begin{proposition}\label{glo1}
If $p>1+\frac{\a}{d}$, i.e. $\frac{d(p-1)}{\a}>1$, and the initial condition $u_0$ is sufficiently small in the sense of the norm of the homogeneous Morrey space $M^{d(p-1)/\a}_q(\R^d)$ with some $q>1$, then solution of problem \rf{f_lap}--\rf{u_0} is global in time. 
\end{proposition}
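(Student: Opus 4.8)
The plan is to recast \rf{f_lap}--\rf{u_0} in its mild (Duhamel) form
\[
u(t)=\se^{-t(-\Delta)^{\a/2}}u_0+\int_0^t\se^{-(t-\tau)(-\Delta)^{\a/2}}\bigl(|u(\tau)|^{p-1}u(\tau)\bigr)\dta=:\mathcal N(u)(t),
\]
and to produce a global solution as the unique fixed point of $\mathcal N$ in a small ball of a scale-invariant Banach space, via the Banach contraction principle. Put $s_c:=\frac{d(p-1)}{\a}$; the hypothesis $p>1+\frac\a d$ is exactly $s_c>1$, which is what makes $M^{s_c}_q$ a genuine homogeneous Morrey space for the given $q>1$ (one needs $1<q\le s_c$). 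The working space will be
\[
\X:=\Bigl\{u:\ \|u\|_\X:=\sup_{t>0}t^{\beta}\,\mn u(t)\mn_{M^{s}_{\bar q}}<\infty\Bigr\},
\]
where the second Morrey index $s$ and integrability $\bar q$ are to be fixed with $s_c<s<p\,s_c$ and $\bar q\ge p$, and the weight $\beta=\frac1{p-1}-\frac d{\a s}$ is forced by requiring $\|\cdot\|_\X$ to be invariant under the scaling $u_\lambda(x,t)=\lambda^{\ap}u(\lambda x,\lambda^\a t)$ that leaves \rf{f_lap} invariant.

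The key analytic input is the family of decay--smoothing bounds for the fractional heat semigroup on homogeneous Morrey spaces, namely
\[
\mn\se^{-t(-\Delta)^{\a/2}}f\mn_{M^{s_2}_{q_2}}\le C\,t^{-\frac d\a\left(\frac1{s_1}-\frac1{s_2}\right)}\mn f\mn_{M^{s_1}_{q_1}},\qquad s_1\le s_2,
\]
valid (with integrability raised and the Morrey ratio $q_i/s_i$ preserved) for the relevant exponents, see \cite{S,BKW1}. Applied with $(s_1,q_1)=(s_c,q)$ and $(s_2,q_2)=(s,\bar q)$, the decay exponent equals precisely $\beta$, so $\sup_{t>0}t^{\beta}\mn\se^{-t(-\Delta)^{\a/2}}u_0\mn_{M^s_{\bar q}}\le C\mn u_0\mn_{M^{s_c}_q}$ and the linear evolution lies in $\X$. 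For the nonlinearity I will use the homogeneity identity $\mn|v|^{p-1}v\mn_{M^{s/p}_{\bar q/p}}=\mn v\mn_{M^s_{\bar q}}^{p}$ (legitimate because $\bar q\ge p$) followed by the same semigroup bound from $M^{s/p}_{\bar q/p}$ to $M^s_{\bar q}$, whose decay exponent is $a:=\frac{d(p-1)}{\a s}$; this reduces the Duhamel term to
\[
\int_0^t (t-\tau)^{-a}\,\mn v(\tau)\mn_{M^s_{\bar q}}^{p}\dta\le\|v\|_\X^{p}\int_0^t(t-\tau)^{-a}\tau^{-p\beta}\dta=C\,\|v\|_\X^{p}\,t^{1-a-p\beta}.
\]
The Beta integral converges because $a<1\Leftrightarrow s>s_c$ and $p\beta<1\Leftrightarrow s<p\,s_c$, and a short computation (equivalently, the scale invariance of $\X$) gives $1-a-p\beta=-\beta$, so the nonlinear part again lies in $\X$ with norm $\le C\|v\|_\X^{p}$.

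Combining the two estimates yields $\|\mathcal N(v)\|_\X\le C_0\mn u_0\mn_{M^{s_c}_q}+C_1\|v\|_\X^{p}$, while the elementary pointwise inequality $\bigl||a|^{p-1}a-|b|^{p-1}b\bigr|\le C\bigl(|a|^{p-1}+|b|^{p-1}\bigr)|a-b|$ together with the same Morrey and time bounds gives $\|\mathcal N(u)-\mathcal N(v)\|_\X\le C_2\bigl(\|u\|_\X^{p-1}+\|v\|_\X^{p-1}\bigr)\|u-v\|_\X$. Hence, choosing $\mn u_0\mn_{M^{s_c}_q}$ small enough, $\mathcal N$ maps a small closed ball of $\X$ into itself and is a strict contraction there; its fixed point is the desired global-in-time mild solution, with $\|u\|_\X\le C\mn u_0\mn_{M^{s_c}_q}$. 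The scaling bookkeeping that fixes $\beta$ and guarantees $a+p\beta=1+\beta$ is automatic, so the real work sits in the semigroup estimates and the index constraints. The delicate point is that the data integrability $q$ may be smaller than $p$, in which case $|u_0|^{p-1}u_0$ need not even be locally integrable; this is repaired by the smoothing of $\se^{-t(-\Delta)^{\a/2}}$, which lets the iteration run in a space with $\bar q\ge p$. Such a choice is possible precisely because $q>1$ and $s_c>1$ leave an open window $s\in(\max(s_c,p\,s_c/q),\,p\,s_c)$ in which both $\bar q=qs/s_c\ge p$ and $s_c<s<p\,s_c$ hold; the strict inequality $q>1$ is, moreover, exactly what the maximal-function-based Morrey semigroup estimates require. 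Verifying these estimates with the stated index relations (or importing them from \cite{S,BKW1}) is the main technical obstacle.
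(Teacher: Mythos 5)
Your proposal is correct and follows essentially the same route as the paper: the mild (Duhamel) formulation solved by Picard iteration/contraction in a time-weighted, scale-invariant space built on a second Morrey space $M^{rs_c/q}_r$, using the fractional heat semigroup estimates \rf{polgrupa} and the same Beta-integral bookkeeping, with your index window $s\in(\max(s_c,ps_c/q),ps_c)$, $\bar q=qs/s_c$, matching the paper's condition $\max\{p,q\}<r<pq$ exactly. The only (inessential) difference is that the paper additionally invokes a bootstrap argument to conclude $u(t)\in L^\infty$ for $t>0$, whereas you stop at the global mild solution, which is all the statement requires.
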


\begin{proof}
For $\a=2$,  if the norm $|\!\!| u_0|\!\!|_{M^{d(p-1)/\a}_q}$  (for a number $q>1$) is small enough, then a~solution of problem \rf{f_lap}--\rf{u_0} is global in time, see \cite[Proposition 6.1]{S} (as well as  a counterpart for the chemotaxis system cf. \cite[Th. 1]{B-SM}).  
Similarly as the former result, it  can be proved directly for $\a\in(0,2)$ (while the proof in \cite{S} for $\a=2$ was by contradiction) using the Picard iterations of the mapping 
$${\mathcal N}(u)(t)={\rm e}^{-t(-\Delta)^{\nicefrac{\alpha}{2}}}u_0+\int_0^t {\rm e}^{-(t-\tau)(-\Delta)^{\nicefrac{\alpha}{2}}}(|u|^{p-1}u)(\tau)\dta, 
$$
$u_{n+1}={\mathcal N}(u_n)$, $n=1,\,2,\, \dots$,   with $u_0\in M^{d(p-1)/\a}_q(\R^d)$ ($q>1$, $p>1+2q/d$) small enough. 
For useful estimates of the heat semigroup in Morrey spaces we refer the reader to \cite{G-M} and \cite[Th. 3.8]{Tay}.  They extend immediately to the estimate 
for $1< p_1< p_2< \infty$  
\be
|\!\!| {\rm e}^{-t(-\Delta)^{\nicefrac{\alpha}{2}}}f|\!\!|_{M^{p_2}_{q_2}}\leqslant Ct^{-d(1/p_1-1/p_2)/\alpha}|\!\!| f|\!\!|_{M^{p_1}_{q_1}}\label{polgrupa}
\ee
which holds with $q_2/q_1=p_2/p_1$ if $p_1<d$. 
Thus, we have 
$$
{\rm e}^{-\, \cdot\, (-\Delta)^{\nicefrac{\alpha}{2}}}:M^s_q(\R^d)\to {\mathcal Y}\equiv\{ u:\ \ \sup_{t>0}t^{-\beta}|\!\!| u(t)|\!\!|_{M^{rs/q}_r}<\infty\}
$$
with $ \beta=\frac{d}{\a}\left(1-\frac{q}{r}\right)=\left(1-\frac{q}{r}\right)\frac{1}{p-1}$. 
When either $p_1=1$ or $p_2=\infty$, the norm are $\|\, .\, \|_1$ and $\|\,.\,\|_\infty$, resp. 
The crucial estimate for the convergence of the Picard iterations is 
\be
\left\|\int_0^t{\rm e}^{-(t-\tau)(-\Delta)^{\nicefrac{\alpha}{2}}}(|u|^{p-1}u)(\tau)\dta\right\|_{M^{rd(p-1)/(\a q)}_r}\leqslant t^{-\beta}\xn u\xn^p,\label{nielin}
\ee
where $\xn u(t)\xn = \sup_{t>0} t^{(1-q/r)/(p-1)}|\!\!| u(t)|\!\!|_{M^{rd(p-1)/(\a q)}_r}$, 
and this follows since 
$$
\int_0^t(t-\tau)^{-\frac{d}{\a}(qp/rs-q/rs)}\tau^{-p\beta}\dta =Ct^{-\beta}
$$
for some constant $C>0$ since $-\frac{d}{\a s}\frac{q}{r}(p-1)-p\beta+1=-\frac{q}{r}+p\beta+1=-\beta$.

\bigskip
They are convergent in the norm $\xn\, .\, \xn$ 
for  $\max\{p,q\}<r<pq$, since $\xn {\rm e}^{-t(-\Delta)^{\nicefrac{\alpha}{2}}}u_0\xn <\infty$.  
Clearly, assumption \rf{ssub} in Th. \ref{glo2} implies $|\!\!|u_0|\!\!|_{ M^{d(p-1)/\a}} <\frac{\s_d}{d-\frac\a{p-1}}\e s(\a,d,p)$, and the conclusion \rf{subb} reads $|\!\!|u(t)|\!\!|_{M^{d(p-1)/\a}}<\frac{\s_d}{d-\frac\a{p-1}}\e s(\a,d,p)$. 

Then, usual bootstrapping arguments like in \cite[proof of Th. 15.2, p. 81]{QS} apply and $u(t)\in L^\infty$ follows for each $t>0$. The author is indebted to Philippe Souplet for this argument showing regularity. Note that this existence result remains valid for nonlinearities with behavior $|F(u)|\sim |u|^p$, $u\to 0$. 
\end{proof}

\begin{remark}
Let us recall that for $\a=2$ the number $p_F = 1+\frac{2}{d}$, called the Fujita exponent, borders the case of a finite-time blowup for all positive solutions (for $p\leqslant p_F$) and the case of existence of some global in time bounded positive solutions (if $p>p_F$) (see for example \cite{QS} and references therein). The exponent $1+\frac{\a}{d}$ is a counterpart of the Fujita exponent in the case $\a\in(0,2)$, see \cite{Su}. 
\end{remark}

In the next proposition $u_0$ is supposed to have asymptotics for large $|x|$ like $u_\infty(x)\asymp \frac{1}{|x|^2}$  but local singularities strictly weaker since $u_0\in M^{\ts} (\R^d)$ with some $\ts>s$ is assumed. 

\begin{proposition}\label{MsMts}
If $p>1+\frac{\a}{d}$, $s=\frac{d(p-1)}{\a}<\ts$ and  $u_0\in M^s(\R^d)\cap M^{\ts}(\R^d)$, then there exists $T>0$ and a local in time solution 
$$
 u\in {\mathcal X}_T\equiv {\mathcal C}([0,T];M^s(\R^d)\cap M^{\ts}(\R^d))\cap \left\{u:(0,T)\to L^\infty(\R^d):\ \ \sup_{0<t<T}t^\frac{d}{\a \ts}\|u(t)\|_\infty<\infty\right\}
$$
of problem \rf{f_lap}--\rf{u_0}. 
\end{proposition}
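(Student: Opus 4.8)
The plan is to construct $u$ as the unique fixed point in a small closed ball of ${\mathcal X}_T$ of the Duhamel map
\[
{\mathcal N}(u)(t)={\rm e}^{-t(-\Delta)^{\nicefrac{\a}{2}}}u_0+\int_0^t {\rm e}^{-(t-\tau)(-\Delta)^{\nicefrac{\a}{2}}}(|u|^{p-1}u)(\tau)\dta,
\]
exactly as in the Picard scheme of Proposition \ref{glo1}, but now only for $T>0$ small. The norm of ${\mathcal X}_T$ consists of the $M^s$ and $M^{\ts}$ norms (continuous in $t$) and the weighted quantity $\sup_{0<t<T}t^{d/(\a\ts)}\|u(t)\|_\infty$, and I would control ${\mathcal N}$ in each. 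For the linear part I invoke boundedness of ${\rm e}^{-t(-\Delta)^{\nicefrac{\a}{2}}}$ on $M^s$ and on $M^{\ts}$ together with the endpoint $p_2=\infty$ of the smoothing estimate \rf{polgrupa}, i.e. $\|{\rm e}^{-t(-\Delta)^{\nicefrac{\a}{2}}}u_0\|_\infty\leqslant Ct^{-d/(\a\ts)}\mn u_0\mn_{M^{\ts}}$; this places ${\rm e}^{-t(-\Delta)^{\nicefrac{\a}{2}}}u_0$ in ${\mathcal X}_T$ with norm bounded by $\mn u_0\mn_{M^s}+\mn u_0\mn_{M^{\ts}}$.

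The nonlinear estimates rest on the elementary module bound $\mn gh\mn_{M^r}\leqslant\|g\|_\infty\mn h\mn_{M^r}$. Factoring $|u|^{p-1}u=|u|^{p-1}\cdot u$ and using $\|u(\tau)\|_\infty\leqslant K\tau^{-d/(\a\ts)}$ from the weighted part of the norm, one obtains for $r\in\{s,\ts\}$
\[
\mn(|u|^{p-1}u)(\tau)\mn_{M^r}\leqslant\|u(\tau)\|_\infty^{p-1}\mn u(\tau)\mn_{M^r}\leqslant K^{p-1}\tau^{-(p-1)d/(\a\ts)}\mn u(\tau)\mn_{M^r}.
\]
The decisive observation is the identity $(p-1)d/(\a\ts)=s/\ts$, so the time singularity is integrable precisely because $\ts>s$; boundedness of the semigroup on $M^r$ then controls the $M^s$ and $M^{\ts}$ parts of the Duhamel integral by $Ct^{1-s/\ts}$. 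For the weighted $L^\infty$ part I split $\int_0^t=\int_0^{t/2}+\int_{t/2}^t$: on $[t/2,t]$ I use $L^\infty$-contractivity of the positivity-preserving $\a$-stable semigroup to reduce to $\int_{t/2}^t\|u(\tau)\|_\infty^p\dta$, and on $[0,t/2]$ I use the smoothing bound with $(t-\tau)^{-d/(\a\ts)}\leqslant(t/2)^{-d/(\a\ts)}$ against $\mn(|u|^{p-1}u)(\tau)\mn_{M^{\ts}}$; both pieces, multiplied by $t^{d/(\a\ts)}$, are $O(t^{1-s/\ts})$. This splitting is what circumvents the non-integrability of $(t-\tau)^{-d/(\a\ts)}$ at $\tau=t$ and lets the whole scheme close under the sole hypothesis $\ts>s$. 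Contraction follows verbatim from $\big||u|^{p-1}u-|v|^{p-1}v\big|\leqslant C(\|u\|_\infty^{p-1}+\|v\|_\infty^{p-1})|u-v|$, the smallness of the $O(t^{1-s/\ts})$ factors for small $T$ making ${\mathcal N}$ a contraction on the ball.

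The step I expect to be genuinely delicate is the continuity up to $t=0$ demanded by $u\in{\mathcal C}([0,T];M^s\cap M^{\ts})$. Continuity for $t>0$ is routine, and the Duhamel term is continuous with value $0$ at $t=0$ since its Morrey norm is $O(t^{1-s/\ts})\to0$. The obstruction is the linear part: the fractional heat semigroup is not strongly continuous at $t=0$ on homogeneous Morrey spaces (as already on $L^\infty$), so ${\rm e}^{-t(-\Delta)^{\nicefrac{\a}{2}}}u_0\to u_0$ in $M^s\cap M^{\ts}$ cannot hold for arbitrary data. I would therefore either interpret the attainment of the initial datum in the weak-$*$ sense, or prove strong continuity on the closure of bounded compactly supported functions (a vanishing-Morrey subspace) and argue that the constructed $u$ still solves \rf{f_lap}--\rf{u_0}. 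This attainment issue, rather than the contraction estimates, is the real heart of the matter.
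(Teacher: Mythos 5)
Your scheme is the same in outline as the paper's --- a Picard/Duhamel fixed point in ${\mathcal X}_T$ with separate control of the $M^s$, $M^{\ts}$ and weighted $L^\infty$ components, and your $M^s$ estimate coincides with the paper's \rf{e1} --- but your treatment of the other two components is genuinely different, and in fact more robust. For the $M^{\ts}$ part the paper (estimate \rf{e2}) smooths from $M^s$ to $M^{\ts}$, producing the exponent $1-\frac{d}{\a}\left(\frac1s-\frac1{\ts}\right)-\frac{d(p-1)}{\a\ts}$, which factors as $\left(1-\frac{1}{p-1}\right)\left(1-\frac{s}{\ts}\right)$ and hence is positive only when $p>2$; your module bound $\mn |u|^{p-1}u\mn_{M^{\ts}}\leqslant\|u\|_\infty^{p-1}\mn u\mn_{M^{\ts}}$ combined with plain boundedness of the semigroup on $M^{\ts}$ yields the exponent $1-s/\ts>0$ for every admissible $p$. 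For the weighted $L^\infty$ part the paper (estimate \rf{e3}) integrates the kernel $(t-\tau)^{-d/(\a s)}=(t-\tau)^{-1/(p-1)}$ over all of $(0,t)$, which is not even integrable at $\tau=t$ when $p\leqslant 2$, and its exponent $\nu$ again equals $\left(1-\frac{1}{p-1}\right)\left(1-\frac{s}{\ts}\right)$; your splitting of the Duhamel integral at $t/2$ ($L^\infty$-contractivity of the sub-Markovian semigroup near $\tau=t$, frozen $M^{\ts}\to L^\infty$ smoothing on $[0,t/2]$) closes the bound with the uniform rate $t^{1-s/\ts}$ for all $p>1+\frac{\a}{d}$. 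Thus your argument covers the full stated range of $p$, while the paper's inequalities, read literally, only close when $p>2$. Finally, the obstruction you identify at $t=0$ is genuine and is not addressed by the paper at all: ${\rm e}^{-t(-\Delta)^{\nicefrac{\a}{2}}}$ fails to be strongly continuous at $t=0$ on the (non-separable) homogeneous Morrey spaces, just as on $L^\infty$, so continuity up to $t=0$ in $M^s(\R^d)\cap M^{\ts}(\R^d)$ can hold only if the attainment of $u_0$ is understood in a weak-$*$ sense or the data are restricted to the vanishing-Morrey closure of nice functions --- precisely the two repairs you propose. In short, your proposal is correct (modulo that reinterpretation, which the proposition as stated needs in any case) and actually strengthens the paper's own computation rather than merely reproducing it.
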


\begin{proof}
We follow the approach and notations in the proof of  Proposition \ref{glo1}. 
We will estimate the nonlinear operator ${\mathcal N}(u)$ in the norms of the spaces $M^s(\R^d)$, $M^{\ts}(\R^d)$ and ${\mathcal Y}_T\equiv  \left\{u:(0,T)\to L^\infty(\R^d):\ \ \sup_{0<t<T}t^\frac{d}{\a \ts}\|u(t)\|_\infty<\infty\right\}$. 

The first estimate is in the $M^s(\R^d)$ norm
\begin{align}
|\!\!|{\mathcal N}(u)(t)|\!\!|_{M^s}&\leqslant C\int_0^t |\!\!|u(\tau)|\!\!|_{M^s} \|u(\tau)\|_\infty^{p-1}\dta\nonumber\\ 
&\leqslant  C\int_0^t |\!\!|u(\tau)|\!\!|_{M^s} \tau^{-\frac{d}{\a \ts}(p-1)}\|u\|_{{\mathcal Y}_T}^{p-1}\dta\nonumber\\
&\leqslant CT^{1-\frac{d}{\a \ts}(p-1)}\sup_{t\in(0,T)}|\!\!|u(t)|\!\!|_{M^s}\|u\|_{{\mathcal Y}_T}^{p-1}\label{e1}
\end{align}
since $ \frac{d}{\a \ts}(p-1)< \frac{d}{\a s}(p-1)=1$.

The second estimate is in the $M^{\ts}(\R^d)$ norm 
\begin{align}
|\!\!|{\mathcal N}(u)(t)|\!\!|_{M^{\ts}}&\leqslant C\int_0^t (t-\tau)^{-\frac{d}{\a}\left(\frac{1}{s}-\frac{1}{\ts}\right)}|\!\!|u(\tau)|\!\!|_{M^s}\|u(\tau)\|_\infty^{p-1}\dta\nonumber\\ 
&\leqslant  C\int_0^t (t-\tau)^{-\frac{d}{\a}\left(\frac{1}{s}-\frac{1}{\ts}\right)}|\!\!|u(\tau)|\!\!|_{M^s} \tau^{-\frac{d}{\a \ts}(p-1)}\|u\|_{{\mathcal Y}_T}^{p-1}\dta\nonumber\\
&\leqslant CT^{1-\frac{d}{\a}\left(\frac{1}{s}-\frac{1}{\ts}\right)-\frac{d}{\a \ts}(p-1)}\sup_{t\in(0,T)}|\!\!|u(t)|\!\!|_{M^s}\|u\|_{{\mathcal Y}_T}^{p-1}\label{e2}
\end{align}
since $1-\frac{d}{\a}\left(\frac{1}{s}-\frac{1}{\ts}\right)-\frac{d}{\a \ts}(p-1)=1-\frac{1}{p-1}-\frac{d}{\a \ts}(p-2)>0$. 

The third estimate is 
\begin{align}
t^{\frac{d}{\a \ts}}\|{\mathcal N}(u)(t)\|_\infty &\leqslant  Ct^{\frac{d}{\a \ts}} \int_0^t(t-\tau)^{-\frac{d}{\a s}}\tau^{-{\frac{d}{\a \ts}(p-1)}}\dta \|u\|_{{\mathcal Y}_T}^{p}\nonumber\\
 &\leqslant  CT^\nu \|u\|_{{\mathcal Y}_T}^{p}\label{e3}
\end{align}
with $\nu=\frac{d}{\a \ts}-\frac{d}{\a s}-\frac{d}{\a \ts}(p-1)+1>0$. 
These bounds \rf{e1}--\rf{e3} lead in a standard way to the convergence of the Picard iterations for  initial data in  $M^{s}(\R^d)\cap M^{\ts}(\R^d)$ of arbitrary size and for sufficiently small $t>0$. 
\end{proof}

\subsection{Large global-in-time solutions}
The main result in this Section  is 
\begin{theorem}\label{glo2}
If  $\a\in(0,2)$, 
$p>1+\frac{\a}{d-\a}(>1+\frac{\a}{d})$, $u=u(x,t)$ is a solution of problem \rf{f_lap}--\rf{u_0}   with the initial data satisfying $u_0\in M^{\ts}(\R^d)$ for some $\ts>\frac{d(p-1)}{\alpha}$ and 
\be 
0\leqslant u_0(x)\leqslant u_\infty(x), \label{ssub}
\ee 
as well as 
\be
\lim_{|x|\to\infty}|x|^\ap u(x,t)=0 \ \ {\rm uniformly\ in}\ \  t\in(0,T).\label{ass-dec}
\ee
Then $u$ can be continued to a global-in-time solution which still satisfies the bound 
\be
0\leqslant u(x,t)\leqslant u_\infty(x).\label{ssubb} 
\ee 
\end{theorem}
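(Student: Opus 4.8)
The plan is to combine the local-in-time theory of Proposition \ref{MsMts} with the observation that the singular steady state $u_\infty$ of Proposition \ref{singC} is itself a stationary solution of \rf{f_lap}, and then to propagate the bound \rf{ssubb} by a comparison argument — the content of Proposition \ref{glo3}. First I would verify the hypotheses of Proposition \ref{MsMts}. Since $u_\infty\in M^{d(p-1)/\a}=M^s$ by Proposition \ref{singC}, assumption \rf{ssub} gives $|\!\!| u_0|\!\!|_{M^s}\leqslant|\!\!| u_\infty|\!\!|_{M^s}<\infty$; combined with $u_0\in M^{\ts}(\R^d)$ this yields $u_0\in M^s(\R^d)\cap M^{\ts}(\R^d)$, so Proposition \ref{MsMts} furnishes a local solution $u\in\mathcal X_T$. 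Nonnegativity is immediate: the kernel of $\se^{-t(-\Delta)^{\nicefrac{\alpha}{2}}}$ is nonnegative and $|u|^{p-1}u$ vanishes at $0$, so $0$ is a subsolution and the mild (Duhamel) formulation driving the Picard scheme of Proposition \ref{glo1} preserves $u\geqslant 0$.

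The heart of the argument is the upper bound $u\leqslant u_\infty$, and here the decay hypothesis \rf{ass-dec} plays the decisive role. Writing $w=u-u_\infty$ and subtracting the stationary identity $-(-\Delta)^{\nicefrac{\alpha}{2}}u_\infty+u_\infty^p=0$ from \rf{f_lap} gives
\[
w_t=-(-\Delta)^{\nicefrac{\alpha}{2}}w+\bigl(u^p-u_\infty^p\bigr).
\]
Because $|x|^\ap u_\infty=s(\a,d,p)>0$ while \rf{ass-dec} forces $|x|^\ap u(x,t)\to 0$ uniformly in $t$, we have $w(x,t)<0$ for all large $|x|$, uniformly on $[0,T]$; hence any positive values of $w$ are confined to a fixed bounded set and $\sup_x w(\cdot,t)$ is attained at finite points. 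Suppose, for contradiction, that this supremum, initially $\leqslant 0$ by \rf{ssub}, first reaches $0$ at some time $t_0$ at a spatial maximum point $x_0$. There $w(x_0,t_0)=0$, so $u(x_0,t_0)=u_\infty(x_0)$ and $u^p-u_\infty^p=0$ at $(x_0,t_0)$; moreover the integral representation \rf{fr-lap} evaluated at a maximum of $w$ gives $(-\Delta)^{\nicefrac{\alpha}{2}}w(x_0,t_0)\geqslant 0$, since every increment $w(x_0-y,t_0)-w(x_0,t_0)\leqslant 0$. Consequently $w_t(x_0,t_0)\leqslant 0$, which forbids $\sup_x w(\cdot,t)$ from increasing through $0$. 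This is precisely the comparison principle of Proposition \ref{glo3}, and making it rigorous — rather than the formal pointwise computation above — is the main obstacle, because $u$ need not be smooth and the operator is nonlocal. I would therefore work with the mild formulation, approximating by a regularised problem (or testing against suitable mollifications), and exploit the sign of the kernel of $\se^{-t(-\Delta)^{\nicefrac{\alpha}{2}}}$ together with the monotonicity of $\xi\mapsto\xi^p$ on $[0,\infty)$; the decay \rf{ass-dec} is exactly what keeps the relevant supremum controlled at spatial infinity, where $u$ and $u_\infty$ share the critical rate $|x|^{-\ap}$. The conclusion is $0\leqslant u(x,t)\leqslant u_\infty(x)$ on $[0,T]$.

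Finally I would upgrade the local solution to a global one. The bound just obtained gives the uniform estimate $|\!\!| u(t)|\!\!|_{M^s}\leqslant|\!\!| u_\infty|\!\!|_{M^s}$ for all $t\in(0,T)$, while the $\mathcal Y_T$-component of $\mathcal X_T$ in Proposition \ref{MsMts} yields $u(t)\in L^\infty(\R^d)$ for every $t>0$. Fixing any $t_1\in(0,T)$, the profile $u(t_1)$ is bounded, which removes the local singularity that keeps $u_\infty$ outside $M^{\ts}$, while at infinity $u(t_1)\leqslant u_\infty\asymp|x|^{-\ap}$ lies in $M^{\ts}$ because $\ts>s$; thus $u(t_1)\in L^\infty\cap M^s\cap M^{\ts}$ and still satisfies \rf{ssub} and, by the smoothing, \rf{ass-dec}. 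Restarting Proposition \ref{MsMts} from $t_1$ and repeating the comparison step, the a priori bound $0\leqslant u\leqslant u_\infty$ prevents any norm from blowing up in finite time, so the solution extends to all $t>0$ while retaining \rf{ssubb}.
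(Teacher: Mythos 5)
Your skeleton (local existence via Proposition \ref{MsMts}, a touching-point comparison argument, then step-by-step continuation) matches the paper's, but the comparison step contains two genuine gaps. First, your pointwise computation at the first touching point yields only the non-strict inequalities $(-\Delta)^{\nicefrac{\alpha}{2}}w(x_0,t_0)\geqslant 0$ and hence $w_t(x_0,t_0)\leqslant 0$, and this forbids nothing: at a first touching time the left time-derivative of $w(x_0,\cdot)$ is automatically $\geqslant 0$, so $w_t(x_0,t_0)\leqslant 0$ is perfectly consistent and produces no contradiction. You need \emph{strict} negativity. The paper manufactures it by comparing not with $u_\infty$ but with the barrier $b(x)=\min\{K|x|^{-\g_0},\delta s(\a,d,p)|x|^{-\ap}\}$ of Proposition \ref{glo3}, with $\delta<1$ strictly: after multiplying by $|x|^\g$ so that the barrier becomes a constant level, the touching-point computation ends with the factor $s(\a,d,p)^{p-1}(\delta^{p-1}-1)<0$. (One could instead try to extract strictness from the strong form of the nonlocal maximum principle, noting $w(\cdot,t_0)\not\equiv 0$ because $w\to-\infty$ at the origin, but that is an additional idea your write-up does not supply.) In particular, your sentence ``this is precisely the comparison principle of Proposition \ref{glo3}'' is inaccurate: that proposition compares with a barrier strictly below $u_\infty$ and with a weakened singularity at the origin, not with $u_\infty$ itself, and the difference is exactly where the strictness lives.

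Second, even granting a strict inequality at the touching point, your contradiction is backward in time (``$w(x_0,t)<0$ for $t<t_0$''), so it requires $\sup_x w(\cdot,0)<0$. But hypothesis \rf{ssub} is non-strict: $u_0$ may touch $u_\infty$ on a whole set, in which case the first touching time is $t_0=0$ and there is no past from which to derive a contradiction. This is precisely why the paper's proof of Theorem \ref{glo2} is not just an invocation of Proposition \ref{glo3}: it approximates $u_0$ by $u_{0k}=\min\{u_0,(1-\nicefrac{1}{k})u_\infty\}$, which satisfy \rf{sub} with $\delta=1-\nicefrac{1}{k}<1$, obtains global solutions $u_k$, proves that the $u_k$ increase in $k$ by writing the equation for $u_l-u_k$ as a linear equation $w_t=-(-\Delta)^{\nicefrac{\alpha}{2}}w+V(x)w$ with a Hardy-type potential $0\leqslant V(x)\leqslant C|x|^{-\a}$ whose semigroup preserves positivity, and finally passes to the monotone pointwise limit, checking via dominated convergence that the limit solves \rf{f_lap} weakly and satisfies \rf{ssubb}. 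Your proposal has no counterpart to this approximation-and-monotone-limit step, and without it the theorem as stated (with $\leqslant$ rather than a strict, uniform gap below $u_\infty$) is not reached.
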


Condition \rf{ssub} means that $u_0\in M^s(\R^d)$ with $s=\frac{d(p-1)}{\alpha}$. Thus, $\ts<s$ can be chosen as close to $\frac{d(p-1)}{\alpha}$ as we wish. Therefore, Proposition \ref{MsMts} on local-in-time solutions applies to those data. 

The result in Theorem \ref{glo2} is based on the following {\em restricted comparison principle}, see \cite[Th. 4.1, Th. 5.1]{BKZ2} for analogous albeit more complicated constructions for radially symmetric solutions of chemotaxis systems. 

\begin{proposition}\label{glo3}
For each $\delta\in(0,1)$ and each $K>0$ there exist $\g_0\in\left(0,\frac\alpha{p-1}\right)$ (independent of $K$ and sufficiently close to $\frac{\alpha}{p-1}$) 
such that every solution $u\in{\mathcal C}^1(\R^d\times(0,T])$ with the properties 
\be
\lim_{|x|\to\infty}|x|^\ap u(x,t)=0 \quad \textit{uniformly in}\quad   t\in(0,T), 
\label{infi}
\ee 
and the initial data satisfying 
\be
0\leqslant u_0(x)<\min\left\{\frac{K}{|x|^{\g_0}},\frac{\delta s(\alpha,d,p)}{|x|^{\frac{\alpha}{p-1}}}\right\}\equiv b(x)
,\label{sub}
\ee
satisfies the estimate 
\be
0\leqslant u(x,t)<b(x)\ \ \ {\rm for\ \ all\ \ }x\in\R^d\ \ {\rm and\ \ } 0< t\leqslant T.\label{subb} 
\ee
\end{proposition}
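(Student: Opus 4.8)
The plan is to exhibit the time–independent function $b$ from \rf{sub} as a stationary supersolution of \rf{f_lap} and then run a comparison argument: if \rf{subb} failed, $u$ would have to touch $b$ from below at a first time $t_0$, and at the contact point the equation together with the supersolution inequality for $b$ would force $u_t<0$, contradicting the fact that at a first touching point $u$ is nondecreasing in time. First I would unfold the structure of $b=\min\{f,g\}$ with $f(x)=K|x|^{-\g_0}$ and $g(x)=\delta s|x|^{-\ap}$, $s=s(\a,d,p)$. Since $\g_0<\ap$, the two profiles cross on the sphere $|x|=R_0$ with $R_0=(\delta s/K)^{1/(\ap-\g_0)}$, and $b=f$ for $|x|\le R_0$, $b=g$ for $|x|\ge R_0$. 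Throughout I would use the Riesz identity \cite[(3.3)]{BKZ2}, $(-\Delta)^{\a/2}|x|^{-\g}=\lambda(\g)|x|^{-\g-\a}$, valid with $\lambda(\g)>0$ for $0<\g<d-\a$, where $\lambda(\g)=2^{\a}\Gamma\big(\tfrac{\g+\a}2\big)\Gamma\big(\tfrac{d-\g}2\big)\big/\big[\Gamma\big(\tfrac{\g}2\big)\Gamma\big(\tfrac{d-\g-\a}2\big)\big]$; note $\lambda(\ap)=s^{p-1}$ by \rf{sC}, and that the hypothesis $p>1+\frac{\a}{d-\a}$ guarantees $\ap<d-\a$, so $\lambda$ is continuous and positive near $\ap$.

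The key observation is a nonlocal comparison that bypasses the corner of $b$ on $|x|=R_0$: because $b\le f$ and $b\le g$ pointwise, at any $x_0$ with $b(x_0)=f(x_0)$ (resp. $=g(x_0)$) the integrand defining $(-\Delta)^{\a/2}b(x_0)$ dominates the one for the corresponding branch, whence $(-\Delta)^{\a/2}b(x_0)\ge(-\Delta)^{\a/2}f(x_0)$ (resp. $\ge(-\Delta)^{\a/2}g(x_0)$). On the outer branch this already yields $(-\Delta)^{\a/2}b(x_0)\ge\delta^{1-p}b(x_0)^p>b(x_0)^p$, since $\delta<1$ forces $\delta^{1-p}>1$. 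On the inner branch, for $|x_0|\le R_0$ it gives $(-\Delta)^{\a/2}b(x_0)\ge K\lambda(\g_0)|x_0|^{-\g_0-\a}$, and the desired bound $\ge b(x_0)^p=K^p|x_0|^{-p\g_0}$ is equivalent to $\lambda(\g_0)\ge K^{p-1}|x_0|^{\,\a-(p-1)\g_0}$. As the exponent $\a-(p-1)\g_0=(p-1)(\ap-\g_0)$ is positive, the right–hand side is largest at $|x_0|=R_0$, where inserting $R_0$ makes the powers of $K$ cancel and the whole requirement collapses to the single condition $\lambda(\g_0)\ge\delta^{p-1}\lambda(\ap)$. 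Because $\delta^{p-1}<1$ and $\lambda(\ap)>0$, continuity of $\lambda$ lets this hold for every $\g_0$ sufficiently close to $\ap$, and the admissible threshold depends only on $\delta$, \emph{not} on $K$ — which is exactly the independence claimed in the statement. For $|x_0|<R_0$ the inequality is strict, and on the corner sphere I would use the outer branch $g$, so that $(-\Delta)^{\a/2}b(x_0)\ge\delta^{1-p}b(x_0)^p>b(x_0)^p$ remains strict with a \emph{finite} right–hand side.

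With $b$ thus a strict stationary supersolution at every $x\ne0$, I would close by comparison. Suppose \rf{subb} fails. The decay hypothesis \rf{infi} gives $|x|^{\ap}(b-u)(x,t)\to\delta s>0$ uniformly in $t$, while near the origin $b(x)=K|x|^{-\g_0}\to\infty$ and $u$ stays bounded on compact time intervals $[t_1,T]$, $t_1>0$; hence the first time $t_0$ and point $x_0$ at which $u(\cdot,t)$ reaches $b$ lie in a compact annulus $0<|x_0|<\infty$ (and $t_0>0$ because $u_0<b$ strictly). At $(x_0,t_0)$ one has $u(x_0,t_0)=b(x_0)$, $u(\cdot,t_0)\le b$, and $\p_t u(x_0,t_0)\ge0$. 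Using the smooth branch dominating $b$ at $x_0$ (the outer branch $g$ when $|x_0|=R_0$), the touching estimate gives $(-\Delta)^{\a/2}u(x_0,t_0)\ge(-\Delta)^{\a/2}b(x_0)$ with finite right–hand side, so \rf{f_lap} produces $0\le\p_t u(x_0,t_0)=-(-\Delta)^{\a/2}u(x_0,t_0)+u(x_0,t_0)^p\le -(-\Delta)^{\a/2}b(x_0)+b(x_0)^p<0$, a contradiction. Nonnegativity $u\ge0$ is inherited from $u_0\ge0$ through the maximum principle for \rf{f_lap}.

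The main obstacle is the rigorous treatment of the corner of $b$ on $|x|=R_0$ together with the justification of a genuine first touching point for the nonlocal parabolic flow. The corner is disarmed by the pointwise domination trick above, so one never differentiates $b$ across the sphere; the worst that can happen is $(-\Delta)^{\a/2}b(x_0)=+\infty$ for $\a\ge1$, which only reinforces the supersolution inequality (and, via the smooth branch, keeps the comparison at $x_0$ finite). Turning ``first touching'' into a clean statement requires the uniform decay \rf{infi} to confine extrema to a compact set; if one prefers not to assume the supremum is attained, a small dilation $b\mapsto(1+\eta)b$ creates strict inequalities that are passed to the limit $\eta\to0$. A secondary technical point is that $(-\Delta)^{\a/2}u(\cdot,t_0)$ be represented by its singular integral at $x_0$, which follows from interior parabolic regularity of the classical solution $u\in{\mathcal C}^1(\R^d\times(0,T])$.
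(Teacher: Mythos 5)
Your proposal is correct and follows essentially the same route as the paper's own proof: a first-touching-point contradiction in which $u$ is compared against the pure-power branch of $b$ selected by the location of the touching point relative to the matching radius (the paper packages this as the weighted function $\tilde u=|x|^{\gamma}u$ attaining an interior maximum, which is precisely the integrand-domination inequality you use), combined with the Riesz identity for $(-\Delta)^{\nicefrac{\alpha}{2}}|x|^{-\gamma}$ and continuity of its coefficient to choose $\gamma_0$ close to $\frac{\alpha}{p-1}$ independently of $K$. Your explicit threshold $\lambda(\gamma_0)\geqslant\delta^{p-1}\lambda\bigl(\tfrac{\alpha}{p-1}\bigr)$ is exactly the condition implicit in the paper's final estimate, just stated quantitatively, which makes the $K$-independence of $\gamma_0$ transparent.
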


Once the comparison principle in Proposition \ref{glo3} is proved, the local-in-time solution constructed in Proposition \ref{MsMts}  can be  continued  onto some interval $[T,T+h]$, and further, step-by-step with the same $h>0$, onto the whole half-line $[0,\infty)$  to a global-in-time solution satisfying the bound $u(x,t)<b(x)$.  
Indeed, if inequality \rf{sub} holds, then $u\in M^{\ts}(\R^d)$ for some $\ts>\frac{d(p-1)}{\alpha}$, and clearly $u_0\in M^{\frac{d(p-1)}{\alpha}}(\R^d)$ by $0\leqslant u_0(x)\leqslant u_\infty(x)$. By Proposition \ref{MsMts}, the solution $u$ is locally in time in $L^\infty(\R^d)$, hence smooth by standard arguments. 

\begin{proof}[Proof of Theorem \ref{glo2}]
Approximating $u_0$ in Theorem \ref{glo2} by initial data \newline $u_{0k}(x)=\min\{u_0(x),(1-\nicefrac{1}{k})u_\infty(x)\}$ satisfying \rf{sub} with $\delta=1-\nicefrac{1}{k}$, $k=2,\, 3,\, 4, \dots$, we obtain a global-in-time solution via a monotonicity argument. This procedure is an adaptation of the monotone  approximation argument for the classical nonlinear heat equation in \cite{GV}. 
The first step is to show the property that $u_k$ increase with $k$. This can be done in standard way writing the equation for the difference $w$ of two approximating solutions $w=u_l-u_k$, $l>k$, as 
$$
w_t=-(-\Delta)^{\nicefrac{\alpha}{2}}w+\frac{u_l^p-u_k^p}{u_l-u_k}w.$$
 This is a linear equation of the type $w_t=-(-\Delta)^{\nicefrac{\alpha}{2}}w+V(x)w$ with $0\le V(x)\le C|x|^{-\alpha}$ considered in the next Section. The associated semigroup conserves positivity of the initial data $w_0$. 
Then, the pointwise monotone limit of $u_k$'s exists and satisfies equation \rf{f_lap} in the weak sense (using the Lebesgue dominated convergence theorem). 

\end{proof}

We sketch the proof of the comparison principle.

\begin{proof}[Proof of Proposition \ref{glo3}]
Let $u$ be a solution of problem \rf{f_lap}--\rf{u_0} for an initial data satisfying \rf{sub}. The proof of inequality \rf{subb} is by contradiction. 
Suppose that there exists $t_0\in(0,T]$ which is the first moment when $u(x,t)$ hits the barrier $b(x)$ defined in \rf{sub}. 
By {\em a~priori} ${\mathcal C}^1$ regularity of $u(x,t)$ and by property \rf{infi} the value of $t_0$ is well defined. 
Moreover, there exists $x_0\in \R^d$ satisfying $u(x_0,t_0)=b(x_0)$. 
Define the number 
\be
R_\#=\left(\frac{\delta s(\alpha,d,p)}{K}\right)^{1/\left(\g_0-\frac{\alpha}{p-1}\right)}\label{RR}
\ee
where two parts of the graph of the barrier $b$ meet. 
Let us consider the auxiliary  function  
\begin{equation}
\tilde u(x,t_0)=|x|^{\g}u(x,t_0)\ \ {\rm with}\ \ \g=  \left\{ \begin{array}{ll}  \label{gamma}
\frac{\alpha}{p-1} & \textrm{if \quad $|x_{0}|\ge R_\#$,}\\
\frac{d}{\ts}<\frac{\alpha}{p-1}& \textrm{if \quad $0\leqslant R_\#\le |x_{0}|$.} 
 \end{array}\right. 
 \end{equation} 
With this choice $\tilde u$ hits a {\em constant} part of the graph of the modified barrier $|x|^\g b(x)$. 

Let us compute according to formula \rf{fr-lap} for the fractional Laplacian 
\begin{align}
\frac{\partial}{\partial t}\tilde u(x_0,t)\big|_{t=t_0}&= 
|x_0|^{\g}(-(-\Delta)^{\nicefrac{\alpha}{2}} u+u^p)\nonumber\\
&=|x_0|^{\g}\left(-(-\Delta)^{\nicefrac{\alpha}{2}}(|x|^{-\g}\tilde u(x,t_0))\big|_{x=x_0} +  \left(|x_0|^{-\g}\tilde u(x_0,t_0)\right)^p\right)\nonumber\\
&=|x_0|^\g P.V.{\mathcal A}\int\frac{|x_0|^\g\tilde u(x_0-y,t_0)-|x_0-y|^\g\tilde u(x_0,t)}{|x_0|^\g|x_0-y|^\g|y|^{d+\a}}\dy\label{1.4}\\
&+|x_0|^{-(p-1)\g}\tilde u(x_0,t_0)^p\nonumber\\
&=|x_0|^\g\tilde u(x_0,t_0)P.V.{\mathcal A}\int\frac{1}{|y|^{d+\a}} \left(\frac{1}{|x_0-y|^{\g}}-\frac{1}{|x_0|^\g}\right) \dy\label{4.19}\\
&+|x_0|^\g P.V.{\mathcal A}\int\frac{\tilde u(x_0-y,t_0)-\tilde u(x_0,t_0)}{|x_0-y|^\g|y|^{d+\a}}\dy +|x_0|^{-(p-1)\g}\tilde u(x_0,t_0)^p\label{3.6}\\
&\leqslant \tilde u(x_0,t_0)\left(|x_0|^\g(-\Delta)^{\nicefrac{\alpha}{2}}\left(|x|^{-\g}\right)_{|_{x=x_0}}+|x_0|^{-(p-1)\g}\tilde u(x_0,t_0)^{p-1}\right)\label{maxi}
\end{align}
The passage from \rf{1.4} to \rf{4.19} is obvious, see an analogous reasoning in \cite[(4.19)]{BKZ2}. 
Formula \rf{4.19} follows by \rf{fr-lap} above. 
Formula \cite[(3.6)]{BKZ2} obtained for $\a+\gamma<d$ from \rf{fr-lap} and convolution identities for powers $|x|^{-\gamma}$, etc., \cite[(3.3)]{BKZ2}
\be
(-\Delta)^{\nicefrac{\alpha}{2}}\left(|x|^{-\gamma}\right) =2^\alpha\frac{\Gamma\left(\frac{d-\gamma}{2}\right)\Gamma\left(\frac{\alpha+\gamma}{2}\right)}{\Gamma\left(\frac{d-\alpha-\gamma}{2}\right)\Gamma\left(\frac{\gamma}{2}\right)}|x|^{-\alpha-\gamma}.
\label{lap-alpha-gamma}
\ee 
gives for $\gamma=\ap$ (so that $\a+\g<d$ by assumption $p>1+\frac{\a}{d-\a}$)
\[
(-\Delta)^{\nicefrac{\alpha}{2}}\left(|x|^{-\ap}\right) =2^\alpha\frac{\Gamma\left(\frac{d-\frac{\a}{p-1}}{2}\right)\Gamma\left(\frac{p\alpha}{2(p-1)}\right)}{\Gamma\left(\frac{d-\frac{p\alpha}{p-1}}{2}\right)\Gamma\left(\frac{\a}{2(p-1)}\right)}|x|^{-\frac{p\alpha}{p-1}}.
\label{lap-alpha-}
\]
We used in this passing from \rf{3.6} to \rf{maxi}. 
In the first case when $\g=\ap$, we recall  formula \rf{sC} for the constant $s(\a,d,p)$ which leads to 
$$\tilde u(x_0,t_0)^{p-1}=\delta^{p-1}s(\a,d,p)^{p-1}=\delta^{p-1} \frac{2^\a}{\Gamma\left(\frac{\a}{2(p-1)}\right)} 
 \frac{\Gamma\left(\frac{d}{2}-\frac{\a}{2(p-1)}\right) \Gamma\left(\frac{p\a}{2(p-1)}\right)}{\Gamma\left(\frac{d}{2}-\frac{p\a}{2(p-1)}\right)} $$
The passage from \rf{3.6} to \rf{maxi} in the middle term  
follows since $\tilde u(.,t_0)$ assumes its maximal value at $x_0$ and \rf{maxi} equals 
$$\tilde u(x_0,t_0)|x_0|^{-\a} s(\a,d,p)^{p-1}  (-1+\delta^{p-1})<0.$$  
The inequality $\frac{\partial}{\partial t}\tilde u(x_0,t)\big|_{t=t_0}<0$   contradicts the assumption that $\tilde u$ hits for the first time the {\em constant} level $\delta s(\alpha,d,p)$ at $t=t_0$.  

In the second case when $\g<\ap$ but close to $\ap$
\be
\tilde u(x_0,t_0)^{p-1}=\left(K|x_0|^{-\g}\right)^{p-1}\leqslant \left(\delta s(\alpha,d,p)|x_0|^{-\ap}\right)^{p-1}\label{up}
\ee 
by the definition of the barrier \rf{sub}. 
Since the coefficient in formula \rf{lap-alpha-gamma} is continuous when $\g\nearrow\ap$, so $(p-1)\g\nearrow\alpha$, the right-hand side of inequality \rf{maxi} is strictly negative for $\g$ close enough to $\ap$, i.e. for $\g\in(\g_0,\ap)$ with $\g_0=\g_0(\delta,\ap)$ independent of other parameters. 
\end{proof}

\begin{remark}\label{Noriko} 
This kind of result in Theorem \ref{glo2} is known for the classical nonlinear heat equation  (cf. \cite[Th. A]{GNW}) but the proof in \cite{B-bl} seems be somewhat novel. 
Similar pointwise arguments are powerful tools and, as such, they have been used in different contexts as e.g. fluid dynamics and  chemotaxis theory:  \cite{BGB,CC,CV,GB-OI},  and free boundary problems. 
If $u_0$ is radially symmetric and $u_0(x)< \delta u_\infty(x)$ for some $\delta<1$,  then the solution of \rf{f_lap}--\rf{u_0} exists globally in time, see \cite[Theorem 1.1]{M1} and also \cite[Remark 3.1(iv)]{S}. Related results are in \cite[Lemma 2.2]{M2}, and stability of the singular solution is studied in \cite{PY}.   
Results for not necessarily radial solutions starting either below or slightly above the singular solution $u_\infty$ are in \cite[Th. 10.4]{GV} (reported in \cite[Th. 20.5]{QS}),  and in \cite[Th. 1.1]{SW}. Note that solutions of the Cauchy problem for equation \rf{f_lap} for $\a=2$  in the latter case are nonunique. 

Here, these considerations have been  extended to the case of diffusion operators in equation \rf{f_lap} defined in a nonlocal way, similarly as it had been done in \cite[Theorem 2.5]{BKZ2} for radial solutions of the chemotaxis system with $\alpha\in(0,2)$. 
\end{remark}

\begin{remark}[decay estimates for global in time solutions]\label{decay-est}
The construction of solutions in Theorem \ref{glo2}
as perturbations of the solution ${\rm e}^{-t(-\Delta)^{\nicefrac{\alpha}{2}}}u_0$ to the linear equation 
$u_t+(-\Delta)^{\nicefrac{\alpha}{2}}u=0$ 
 leads to the asymptotic estimate 
\be
\|u(t)\|_\infty={\mathcal O}(t^{-1/(p-1)})\ \ {\rm  when}\ \ t\to\infty,\nonumber 
\ee
 exactly the same as for solutions to that equation 
with general positive initial data in $M^{d(p-1)/\a}(\R^d)$. 
Similarly as in the case of classical nonlinear heat equation \rf{f_lap} with $\a=2$, cf. \cite[Theorems 20.2, 20.6, 20.15]{QS}, one may expect better decay rate under stronger assumptions on the regularity of the initial data even for somewhat bigger size of data than in the proof of Theorem \ref{glo2}. 
\end{remark}

\section{Asymptotic behavior of solutions below the singular solution}\label{asym}
\setcounter{equation}{0}

We consider in this section behavior of solutions close to the singular solution $u_\infty$ but lying below it. {We use the same approach as in \cite{Pi1} for classical nonlinear heat equation.}

Introducing a new variable $w(x,t) =u_\infty (x) - u(x,t)$, where $ u = u (x, t) $ is a solution to \rf{f_lap}--\rf{u_0} the considered problem takes the form
\begin{align}
\label{h.e.1}  w_t & =- (-\Delta )^{\nicefrac{\alpha }{2}} w + s(\alpha, d, p)^{p-1}p |x|^{-\alpha}w - \Big( ( u_\infty -w)^p - u_\infty ^p + pu_\infty^{p-1}w\Big) ,\\
  \label{h.e.1.0} w(x,0)&=w_0(x),
\end{align}
where the constant $s(\alpha, d, p)$ is defined in formula \rf{sC}.
Note that the last term on the right-hand side of equation \rf{h.e.1} is nonpositive, namely
\begin{equation*} 
(u_\infty -w)^p-u_\infty ^p \geqslant -pu_\infty ^{p-1}w,
\end{equation*}
which is the direct consequence of the convexity of the function $f(s)=|s|^p$  
on $(0,\infty)$. Indeed, since the graph of the function $f$ lies above all of its tangents, we have $f(s-h)-f(s)\geqslant -f'(s)h$ for all $s$ and $h$ in $\bbfR$.

The proofs of our results are based on the following elementary observation. If $w$ is a~nonnegative solution of equation \rf{h.e.1} with the initial condition $w_0(x)\geqslant 0$, then 
\begin{equation*}
    0\leqslant w(x,t)\leqslant \se^{-tH}w_0(x)
\end{equation*}
with the operator $Hw= (-\Delta )^{\nicefrac{\alpha }{2}} w-s(\alpha, d, p) p |x|^{-\alpha}w$. Consequently, using the condition $0\leqslant u_0(x) \leqslant u_\infty (x)$ and the just mentioned comparison principle we can write 
\begin{align}
    \label{v inf and e H 0}  & 0\leqslant u_\infty (x) -u(x,t)\leqslant \se^{-tH}\big( u_\infty(x)-u_0(x) \big)\\
    \intertext{or, equivalently,    }
    \label{v inf and e H}    &u_\infty (x) -\se^{-tH}\big( u_\infty (x)-u_0(x) \big) \leqslant u(x,t)\leqslant u_\infty (x).
\end{align}

First, we concentrate on existence and properties of solutions to the linear initial value problem
\begin{align}
  \label{lin_w} w_t & =- (-\Delta )^{\nicefrac{\alpha }{2}} w + s(\alpha,d,p)^{p-1}p |x|^{-\alpha}w ,\\
  \label{lin_w_0} w(x,0)&=w_0(x).
\end{align}

\subsection{Linear fractional equation with the Hardy potential}

In this section we recall from \cite{BGJP} the estimate from above of the fundamental solution of the equation $u_t=- (-\Delta )^{\nicefrac{\alpha }{2}}u +\kappa 
|x|^{-\alpha}u$ for moderate values of $\kappa$, i.e. $\kappa\leqslant \frac{(2\pi)^\alpha}{c_\alpha}$ with the constant 
\begin{equation}\label{constant_c_alpha}
   c_{\alpha } = \pi^\alpha \left[ \Gamma \left(\frac{d-\alpha }{4}\right) / \Gamma \left( \frac{d+\alpha }{4} \right) \right]^2 
\end{equation} 
appearing in a fractional version of the Hardy inequality. 
 Following those arguments, we define the weights $\vf_\sigma (x,t) \in {\mathcal C}(\bbfR^d \setminus \{0\})$ as 
\begin{equation}\label{weights}
         \vf_\sigma (x,t)= 1+t^{\nicefrac{\sigma}{\alpha}}|x|^{-\sigma},
\end{equation}
where $\sigma \in (0, d-\alpha )$ satisfies the following equality
\begin{equation}\label{gamma_lambda}
    -2^{\alpha } +\lambda  \frac{\Gamma\left(\frac{\sigma}{2}\right)\Gamma\left(\frac{d}{2} - \frac{\sigma +\alpha}{2}\right)}{\Gamma\left(\frac{d}{2} - \frac{\sigma}{2}\right)\Gamma\left(\frac{\sigma + \alpha}{2}\right)}  = 0.
\end{equation}

 \begin{theorem}\label{kernel th}
       Let $\alpha\in(0,d\wedge 2)$, $Hu=- (-\Delta )^{\nicefrac{\alpha }{2}} u+s(\alpha,d,p)^{p-1} p |x|^{-\alpha}u$. 
Assume that $0 \leqslant s(\alpha, d,p)^{p-1} p\leqslant \tfrac{(2\pi )^\alpha}{c_\alpha }$ with  $c_\alpha$ defined in \rf{constant_c_alpha}.

      The semigroup $\se^{-tH}$ of the linear operators generated by $H$ can be written as the integral operator with a kernel denoted by $\se^{-tH}(x,y)$, namely
\begin{equation*}
         \se^{-tH}u_0 (x)=\int _{\bbfR ^n}\se^{-tH}(x,y)u_0(y) \ud y.
\end{equation*}    
      Moreover, there exists a positive constant $C$ such that for all $t>0$ and all $x,y \in \bbfR^d \setminus  \{0\}$
\begin{equation}\label{kernel}
         {0 \leqslant \se^{-tH}(x,y) \leqslant C\varphi_\sigma (x,t)\ \varphi_\sigma (y,t)\ G_\alpha (x-y, t)},
\end{equation}
\begin{equation*}
         0 \leqslant \se^{-tH}(x,y) \leqslant C(1+t^{\nicefrac{\sigma}{\alpha}}|x|^{-\sigma})(1+t^{\nicefrac{\sigma}{\alpha}}|y|^{-\sigma})\left(t^{-\nicefrac{d}{\alpha}}\wedge \frac{t}{|x-y|^{d+\alpha}}\right),
\end{equation*}
      where the functions $\vf_\sigma $ are defined in \rf{weights} and $G_{\alpha} (x-y, t)$ is the fractional heat kernel.
\end{theorem}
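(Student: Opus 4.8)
The plan is to deduce the bound \rf{kernel} from the desingularization results of \cite{BGJP}, after verifying that the coupling constant $\kappa:=s(\alpha,d,p)^{p-1}p$ lies in the admissible range and selects the exponent $\sigma$ entering the weights. First I would note that, by the hypothesis $0\leqslant\kappa\leqslant(2\pi)^\alpha/c_\alpha$ and the sharp fractional Hardy inequality (in which $c_\alpha$ from \rf{constant_c_alpha} is the optimal constant), the quadratic form associated with $H$,
$$
\mathcal{E}_H(u,u)=\la(-\Delta)^{\nicefrac{\alpha}{2}}u,u\ra-\kappa\int\frac{u(x)^2}{|x|^\alpha}\dx,
$$
is nonnegative, so that $H$ is a nonnegative self-adjoint operator on $L^2(\bbfR^d)$ generating a symmetric semigroup $\se^{-tH}$. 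The Beurling--Deny criteria then force $\se^{-tH}$ to be positivity preserving, which both produces the nonnegative integral kernel $\se^{-tH}(x,y)$ and gives the left inequality in \rf{kernel}.

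Next I would pin down $\sigma$. By the convolution identity \rf{lap-alpha-gamma}, one has $(-\Delta)^{\nicefrac{\alpha}{2}}(|x|^{-\sigma})=\lambda(\sigma)|x|^{-\sigma-\alpha}$, and imposing $\lambda(\sigma)=\kappa$ is precisely equation \rf{gamma_lambda}. The map $\sigma\mapsto\lambda(\sigma)$ is increasing on $(0,\tfrac{d-\alpha}{2}]$ and attains the critical Hardy value $(2\pi)^\alpha/c_\alpha$ at $\sigma=\tfrac{d-\alpha}{2}$, so the admissibility hypothesis yields a solution $\sigma\in(0,d-\alpha)$ of \rf{gamma_lambda} (unique once one insists on $\sigma\leqslant\tfrac{d-\alpha}{2}$); this is the weight exponent. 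With this choice the power $h_\sigma(x)=|x|^{-\sigma}$ is a stationary $H$-harmonic function, since $(-\Delta)^{\nicefrac{\alpha}{2}}h_\sigma=\kappa|x|^{-\alpha}h_\sigma$, and the parabolic weight $\vf_\sigma(x,t)=1+t^{\nicefrac{\sigma}{\alpha}}|x|^{-\sigma}$ of \rf{weights} interpolates between the free harmonic constant $1$ and the self-similar rescaling of $h_\sigma$.

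For the upper bound I would run the Duhamel perturbation series
$$
\se^{-tH}(x,y)=\sum_{n\geqslant0}p_n(t,x,y),\qquad p_{n+1}(t,x,y)=\int_0^t\!\!\int G_\alpha(t-\tau,x-z)\,\frac{\kappa}{|z|^\alpha}\,p_n(\tau,z,y)\,\ud z\dta,
$$
with $p_0=G_\alpha$, and control it by the two facts established in \cite{BGJP}: a three-factor (3P-type) estimate bounding $G_\alpha(t-\tau,x-z)|z|^{-\alpha}G_\alpha(\tau,z-y)$ by the weights times $G_\alpha(t,x-y)$, and the near-invariance of $\vf_\sigma$ under the free semigroup, of the schematic form $\int G_\alpha(t,x-y)\vf_\sigma(y,\tau)\dy\lesssim\vf_\sigma(x,t+\tau)$, which itself follows from scaling and the stationarity of $h_\sigma$. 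Summing the resulting weighted geometric series gives $\se^{-tH}(x,y)\leqslant C\vf_\sigma(x,t)\vf_\sigma(y,t)G_\alpha(x-y,t)$, and the second displayed estimate is then obtained by inserting the classical two-sided bound $G_\alpha(x-y,t)\asymp t^{-\nicefrac{d}{\alpha}}\wedge\frac{t}{|x-y|^{d+\alpha}}$.

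The step I expect to be the main obstacle is this kernel upper bound. The potential $|x|^{-\alpha}$ is scale-critical relative to $(-\Delta)^{\nicefrac{\alpha}{2}}$, so the naive series diverges near the singularities $x=0$ and $y=0$; the entire role of the weight $\vf_\sigma$ (and of the $h_\sigma$-transform behind it) is to renormalize exactly this borderline divergence. Establishing the 3P inequality and the weight near-invariance uniformly down to the critical coupling $\kappa=(2\pi)^\alpha/c_\alpha$, where $\sigma=\tfrac{d-\alpha}{2}$, is the delicate technical heart of the argument, and is precisely what is imported from \cite{BGJP}.
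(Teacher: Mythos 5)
Your proposal is correct and is essentially the paper's own approach: the paper gives no proof of Theorem \ref{kernel th} at all, but recalls it directly from \cite{BGJP}, which is exactly where you import the hard steps (the Duhamel perturbation series, the 3P-type bound, and the near-invariance of $\vf_\sigma$ under the free semigroup). Your added scaffolding --- form-nonnegativity of $H$ via the sharp Hardy constant $(2\pi)^\alpha/c_\alpha$, positivity of the kernel, and the identification of $\sigma$ from \rf{gamma_lambda} as the exponent of the $H$-harmonic power $|x|^{-\sigma}$ --- is an accurate outline of how the cited result is established, so the two routes coincide.
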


\begin{remark}\label{JL-exp}
Another important exponent for $\a=2$ is the Joseph-Lundgren exponent $p_{JL} = \frac{d-2\sqrt{d-1}}{d-4-2\sqrt{d-1}}$ which plays a crucial role in the study of stability of solutions to the classical nonlinear heat equation (see \cite{Pi1} and references therein). The analogue of this exponent for $\a\in(0,2)$ is the critical value of $p$ for which the assumption $s(\alpha, d,p)^{p-1} p\leqslant \tfrac{(2\pi )^\alpha}{c_\alpha }$ is fulfilled. 
Observe that the assumption 
\be 
s(\alpha, d,p)^{p-1} p\leqslant \tfrac{(2\pi )^\alpha}{c_\alpha} \label{pJL}
\ee
 is satisfied for certain $d$ (large) and $p$ (close to $\frac{d}{d-\alpha}>1$). 
This follows from the asymptotics of the expression  $s(\alpha, d,p)^{p-1} p$ 
 for $p\searrow \frac{d}{d-\alpha}$. Indeed, 
$$
\lim_{p\searrow \frac{d}{d-\alpha}}s(\alpha,d,p)^{p-1}p=0,
$$
thus there exists $p>\frac{d}{d-\alpha}$ satisfying inequality \rf{pJL}. 
Note that the exponent $p=\frac{d+\a}{d-\a}>1$ does not satisfy assumption \rf{pJL} since  according to  \cite[Remark 3]{BMW} \newline 
$s\left(\a,d,\frac{d+\a}{d-\a}\right)^{\frac{2\a}{d-\a}}=\frac{(2\pi)^\a}{c_\a}$.  
\end{remark}

The following theorem is the consequence of the estimates stated in formulas  \rf{kernel}. 

\begin{theorem}\label{w half l}
   Let the assumptions of Theorem \ref{kernel th} be valid {and $\sigma \in (0,d-\alpha)$ satisfy equation \rf{gamma_lambda}}. Assume that $p>1+\frac{\alpha}{d-\sigma}$.
   Suppose that there exist $b>0$ and $\ell \in \left(\frac{\alpha}{p-1}, d-\sigma\right)$ such that if a nonnegative function $w_0$ satisfies
\begin{alignat*}{2}
  &w_0(x)\leqslant b|x|^{-\frac{\alpha}{p-1}} &\quad &\textit{for} \quad |x|\leqslant 1 ,\\
  &w_0(x)\leqslant b|x|^{-\ell} &\quad &\textit{for}\quad  |x|\geqslant 1,
\end{alignat*}
  then
\begin{equation}\label{sup etH}
   \sup_{x\in \bbfR^d} \vf_\sigma ^{-1}(x,t) |\se^{-tH}w_0(x)|\leqslant Ct^{-\frac{\ell}{2}}
\end{equation}
holds  for a constant $C>0$ and all $t\geqslant 1$.
\end{theorem}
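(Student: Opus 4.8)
The plan is to feed the kernel bound \rf{kernel} from Theorem \ref{kernel th} into the integral representation of $\se^{-tH}w_0$ and reduce the whole statement to a weighted convolution against the fractional heat kernel. Since
$$
0\leqslant \se^{-tH}w_0(x)\leqslant C\,\vf_\sigma(x,t)\int \vf_\sigma(y,t)\,G_\alpha(x-y,t)\,w_0(y)\dy,
$$
the output weight $\vf_\sigma^{-1}(x,t)$ in \rf{sup etH} cancels the factor $\vf_\sigma(x,t)$ produced by the kernel \emph{exactly}. Hence it suffices to bound, uniformly in $x\in\bbfR^d$ and for $t\geqslant 1$,
$$
J(x,t)\equiv\int \vf_\sigma(y,t)\,G_\alpha(x-y,t)\,w_0(y)\dy\leqslant Ct^{-\ell/\alpha}.
$$
This cancellation is the structural point: the weight in \rf{sup etH} is designed precisely to absorb the $x$-weight coming from \rf{kernel}.

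Next I would split the integrand two ways. Writing $\vf_\sigma(y,t)=1+t^{\sigma/\alpha}|y|^{-\sigma}$ gives $J=J_1+t^{\sigma/\alpha}J_2$, where $J_1$ carries no weight and $J_2$ carries $|y|^{-\sigma}$; and I split the domain into $\{|y|\leqslant 1\}$, where $w_0(y)\leqslant b|y|^{-\ap}$, and $\{|y|\geqslant1\}$, where $w_0(y)\leqslant b|y|^{-\ell}$. For the two tail pieces I use the self-similarity $G_\alpha(x-y,t)=t^{-d/\alpha}G_\alpha\big((x-y)t^{-1/\alpha},1\big)$: the substitution $y=t^{1/\alpha}z$, $x=t^{1/\alpha}\xi$ turns $\int G_\alpha(x-y,t)|y|^{-\ell}\dy$ into $t^{-\ell/\alpha}\int G_\alpha(\xi-z,1)|z|^{-\ell}\ud z$, whose supremum over $\xi$ is finite because $0<\ell<d$ (here $\ell<d-\sigma<d$) controls the singularity at $z=0$ while the heat-kernel tail controls $|z|\to\infty$. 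The weighted tail piece is identical with $\ell$ replaced by $\sigma+\ell$, and the prefactor $t^{\sigma/\alpha}$ restores the rate $t^{-\ell/\alpha}$; the scaled integral converges since $0<\sigma+\ell<d$, which is exactly $\ell<d-\sigma$.

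For the two origin pieces I use the pointwise bound $G_\alpha(x-y,t)\leqslant t^{-d/\alpha}$ together with integrability of $|y|^{-\ap}$ and $|y|^{-\sigma-\ap}$ on the unit ball, guaranteed by $\ap<d-\sigma<d$ (equivalently $p>1+\tfrac{\alpha}{d-\sigma}$). These pieces are $\mathcal O(t^{-d/\alpha})$ and $\mathcal O\big(t^{(\sigma-d)/\alpha}\big)$ respectively, both faster than $t^{-\ell/\alpha}$ since $\ell<d$ and $\ell<d-\sigma$. Collecting the four estimates yields $\sup_x J(x,t)\leqslant Ct^{-\ell/\alpha}$ for $t\geqslant1$, which is the claim (for $\alpha=2$ this is the $t^{-\ell/2}$ of \rf{sup etH}, in line with \cite{Pi1}).

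The main obstacle is the exponent bookkeeping that forces the slowest term to be exactly $t^{-\ell/\alpha}$ \emph{uniformly in $x$}: one must verify that the singular weight $|y|^{-\sigma}$ combined with the two-regime kernel neither destroys integrability near $y=0$ (needing $\sigma+\ap<d$ and $\sigma+\ell<d$) nor degrades the tail rate, and that the admissible range $\ell\in(\ap,d-\sigma)$ with $\sigma\in(0,d-\alpha)$ is precisely what keeps every exponent feasible while letting the large-$|x|$ decay $|x|^{-\ell}$ of $w_0$ dictate the final rate. The lower bound $\ell>\ap$ is what prevents the near-origin singularity of $w_0$ from dominating; the rest is the routine scaling computation sketched above.
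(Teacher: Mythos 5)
Your proof is correct and follows essentially the same route as the paper: both feed the kernel bound \rf{kernel} into the integral representation so that $\vf_\sigma^{-1}(x,t)$ cancels the $x$-weight exactly, then split the resulting weighted convolution according to the structure of $\vf_\sigma$ and the two-regime assumption on $w_0$, arriving at the rate $t^{-\ell/\alpha}$ for $t\geqslant 1$ (which is what \rf{sup etH} should read; the stated $t^{-\ell/2}$ is just the $\alpha=2$ specialization). The only cosmetic difference is that you split the weight additively and handle the tail by the self-similar scaling $y=t^{\nicefrac{1}{\alpha}}z$, whereas the paper splits the domain at $|y|=t^{\nicefrac{1}{\alpha}}$, bounds $\vf_\sigma$ regionally, and computes the annulus integral explicitly --- the exponents and required conditions ($\sigma+\tfrac{\alpha}{p-1}<d$ and $\sigma+\ell<d$) come out identically.
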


\begin{proof}
  First, for every fixed $x \in \bbfR^d $, we apply the estimate of the kernel $\se^{-tH}$ in Theorem \ref{kernel th} in the following way
\begin{equation*}
    \vf_\sigma ^{-1}(x,t) \big|\se^{-tH}w_0(x)\big| \leqslant C\int_{\bbfR^d} G_{\alpha} (x-y,t)\vf_\sigma (y,t)w_0(y) \ud y.
\end{equation*}
Next, we split the integral on the right-hand side into three parts $I_1(x,t)$, $I_2(x,t)$ and $I_3(x,t)$ according to the definition of the weights $\vf_\sigma $ and the assumptions on the function $w_0$. Let us begin with $I_1(x,t)$ 
\begin{equation*}
     \begin{split}
      I_1(x,t)&\equiv  C\int_{|y|\leqslant 1} G_\alpha (x-y,t)\vf_\sigma (y,t)w_0(y) \ud y \\
                 &\leqslant Cb t^{\nicefrac{\sigma }{\alpha}}\int_{|y|\leqslant 1} G_\alpha (x-y, t)|y|^{-\sigma -\frac{\alpha}{p-1}} \ud y 
      \leqslant Cbt^{-\nicefrac{(d-\sigma)}{\alpha}},
      \end{split}
\end{equation*} 
because $G_\alpha (x-y,t)$ is bounded by $C t^{-\nicefrac{d}{\alpha}}$ and the function $|y|^{-\sigma -\frac{\alpha}{p-1}}$ is integrable for $|y|\leqslant 1$ if $p>1+\frac{\alpha}{d-\sigma}$. 

We use the same argument to deal with 
\begin{equation*}
   \begin{split}
    I_2(x,t)&\equiv C\int_{1\leqslant |y|\leqslant t^{\nicefrac{1}{\alpha}}}G_\alpha (x-y,t)\vf_\sigma (y,t)w_0(y) \ud y\\
     &\leqslant Cbt^{\nicefrac{\sigma }{\alpha}} \int_{1\leqslant |y|\leqslant t^{\nicefrac{1}{\alpha}}} G_\alpha (x-y,t) |y|^{-\sigma -\ell} \ud y  
        \leqslant Cbt^{\nicefrac{\sigma -d}{\alpha}} \int_{1\leqslant |y|\leqslant t^{\nicefrac{1}{\alpha}}}|y|^{-\sigma -\ell} \ud y \\
     &\leqslant Cbt^{-\nicefrac{\ell}{\alpha}} + Cbt^{-\nicefrac{(d-\sigma)}{\alpha}}.
   \end{split}
\end{equation*}

Finally, we estimate 
\begin{equation*}
    \begin{split}
     I_3(x,t)&\equiv C\int_{|y|\geqslant t^{\nicefrac{1}{\alpha}}} G_\alpha (x-y, t)\vf_\sigma(y,t) w_0(y) \ud y \\
     &\leqslant Cb\int_{|y|\geqslant t^{\nicefrac{1}{\alpha}}} G_\alpha (x-y, t)|y|^{-\ell} \ud y \leqslant Cbt^{-\nicefrac{\ell}{\alpha}},
     \end{split}
\end{equation*}
using the inequality $1\leqslant \left(\frac{t^{\nicefrac{1}{\alpha}}}{|y|}\right)^\ell $ for $|y|\geqslant t^{\nicefrac{1}{\alpha}}$ and the identity $\int_{\bbfR^d} G_\alpha (x-y,t) \ud y=1$ for $t>0$, $x\in \bbfR^d $.
Since $\ell \in \left(\frac{\alpha}{p-1}, d-\sigma\right)$,   the proof of  \rf{sup etH} is completed.
\end{proof}

\begin{theorem}\label{limit e -tH th}
    Assume that  $|\cdot |^\sigma w_0 \in L^\infty (\bbfR^d )$ and $\lim_{|x|\rightarrow \infty }|x|^\sigma w_0 (x)=0$, {where $\sigma \in (0,d-\alpha)$ satisfies equation \rf{gamma_lambda}}.
   Then 
\begin{equation}\label{limit e -tH}
   \lim_{t\rightarrow \infty }t^{\nicefrac{\sigma }{\alpha}}\sup _{x\in \bbfR^d}\vf^{-1} _\sigma (x,t)\left|\se^{-tH}w_0(x)\right|=0.
\end{equation}
\end{theorem}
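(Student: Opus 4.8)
The plan is to read off the sharp decay rate from the Gaussian-type kernel bound \rf{kernel} of Theorem \ref{kernel th} together with the vanishing of $w_0$ at infinity, the mechanism being a parabolic self-similar rescaling that converts decay in $t$ into concentration of mass near an integrable singularity. Exactly as at the start of the proof of Theorem \ref{w half l}, the upper bound $0\leqslant \se^{-tH}(x,y)\leqslant C\vf_\sigma(x,t)\vf_\sigma(y,t)G_\alpha(x-y,t)$ gives, for every $x$,
\[
\vf_\sigma^{-1}(x,t)\big|\se^{-tH}w_0(x)\big|\leqslant C\int_{\bbfR^d}G_\alpha(x-y,t)\,\vf_\sigma(y,t)\,|w_0(y)|\ud y,
\]
so it suffices to prove that $t^{\nicefrac{\sigma}{\alpha}}$ times the supremum over $x$ of the right-hand side tends to $0$.

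Next I would rescale. Write $\psi(y)=|y|^\sigma w_0(y)$; by hypothesis $\psi\in L^\infty(\bbfR^d)$ with $M:=\|\psi\|_\infty$ and $\psi(y)\to 0$ as $|y|\to\infty$. Using the self-similarity $G_\alpha(x-y,t)=t^{-\nicefrac{d}{\alpha}}\Phi\big(t^{-\nicefrac{1}{\alpha}}(x-y)\big)$, where $\Phi=G_\alpha(\cdot,1)$ is a bounded probability density with $\int\Phi=1$, and substituting $y=t^{\nicefrac{1}{\alpha}}z$, $x=t^{\nicefrac{1}{\alpha}}\xi$ (so that $\vf_\sigma(t^{\nicefrac{1}{\alpha}}z,t)=1+|z|^{-\sigma}$), a direct computation collapses all powers of $t$ and yields
\[
t^{\nicefrac{\sigma}{\alpha}}\int_{\bbfR^d}G_\alpha(x-y,t)\,\vf_\sigma(y,t)\,|w_0(y)|\ud y=\int_{\bbfR^d}\Phi(\xi-z)\,g(z)\,\big|\psi(t^{\nicefrac{1}{\alpha}}z)\big|\ud z,
\]
with $g(z)=(1+|z|^{-\sigma})|z|^{-\sigma}=|z|^{-\sigma}+|z|^{-2\sigma}$ and $\xi=t^{-\nicefrac{1}{\alpha}}x$. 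Since $x\mapsto\xi$ is a bijection, the theorem reduces to showing $\sup_\xi\int\Phi(\xi-z)g(z)|\psi(t^{\nicefrac{1}{\alpha}}z)|\ud z\to 0$ as $t\to\infty$.

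I would then exploit the decay of $\psi$ by splitting the $z$-integral. Given $\e>0$, choose $R$ with $|\psi(w)|<\e$ for $|w|>R$. On the far set $\{|z|>Rt^{-\nicefrac{1}{\alpha}}\}$ one has $|\psi(t^{\nicefrac{1}{\alpha}}z)|<\e$, so its contribution is at most $\e\sup_\xi(\Phi* g)(\xi)=:\e C_0$. On the near set $\{|z|\leqslant Rt^{-\nicefrac{1}{\alpha}}\}$ one uses $|\psi|\leqslant M$ and $\Phi\leqslant\|\Phi\|_\infty$, bounding its contribution by $M\|\Phi\|_\infty\int_{|z|\leqslant Rt^{-\nicefrac{1}{\alpha}}}g(z)\ud z$; as $t\to\infty$ this shrinking ball concentrates at the origin, where $g$ is integrable, so the integral tends to $0$ uniformly in $\xi$. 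Letting $t\to\infty$ and then $\e\to0$ completes the argument.

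The only place where care is needed — and the main obstacle — is the uniform-in-$\xi$ boundedness $C_0=\sup_\xi(\Phi* g)(\xi)<\infty$ together with the uniform smallness of the near-origin piece. Both hinge on the local integrability of $g$ at the origin, i.e. on $2\sigma<d$; this holds because the relevant root of \rf{gamma_lambda} satisfies $\sigma<\tfrac{d-\alpha}{2}$, whence $2\sigma<d-\alpha<d$. Boundedness of $\Phi* g$ away from the origin is then immediate: for $|z|\geqslant1$ one has $g(z)\leqslant 2$ while $\int\Phi=1$, and for $|z|<1$ one uses $\Phi\leqslant\|\Phi\|_\infty$ with $\int_{|z|<1}g<\infty$. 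Beyond bookkeeping the rescaling and verifying these integrability thresholds I expect no genuine difficulty.
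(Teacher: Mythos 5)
Your proof is correct, and its engine is the same as the paper's: the kernel bound \rf{kernel}, the parabolic rescaling $y=t^{\nicefrac{1}{\alpha}}z$, the decay of $\psi=|\cdot|^{\sigma}w_0$, and the local integrability of $|z|^{-2\sigma}$. The difference is organizational. The paper splits the $y$-integral at the weight-transition radius $|y|=t^{\nicefrac{1}{\alpha}}$: on the inner region it rescales and invokes dominated convergence (the integrand tends to $0$ pointwise because $\psi(t^{\nicefrac{1}{\alpha}}z)\to 0$ for fixed $z\neq 0$, dominated by $\|\psi\|_\infty |z|^{-2\sigma}$), while on the outer region it uses $1\leqslant (t^{\nicefrac{1}{\alpha}}/|y|)^{\sigma}$, the smallness $\sup_{|y|\geqslant t^{\nicefrac{1}{\alpha}}}|\psi(y)|<\ve$ for large $t$, and $\int G_\alpha(x-y,t)\ud y=1$. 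You instead rescale globally first and split at the $\psi$-decay radius $|z|=Rt^{-\nicefrac{1}{\alpha}}$, obtaining fully quantitative bounds (an $\ve\,\sup_\xi(\Phi*g)(\xi)$ term plus a shrinking-ball term) with no appeal to dominated convergence; the two decompositions are interchangeable and yield the same conclusion, yours being slightly more explicit about uniformity in $x$. One point where your write-up is more careful than the paper's: the finiteness of $\int_{|z|\leqslant 1}|z|^{-2\sigma}\ud z$, which the paper uses without comment, indeed requires $2\sigma<d$, and you correctly ground it in the fact that the root of \rf{gamma_lambda} relevant to the kernel estimate is the smaller one, $\sigma\leqslant\tfrac{d-\alpha}{2}$ (equation \rf{gamma_lambda} is invariant under $\sigma\mapsto d-\alpha-\sigma$, so it has two roots, and the estimates of \cite{BGJP} are formulated for the one in $\big(0,\tfrac{d-\alpha}{2}\big]$), whence $2\sigma\leqslant d-\alpha<d$; note only that at the critical coupling this is an equality, $\sigma=\tfrac{d-\alpha}{2}$, which still suffices.
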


\begin{proof}
For every fixed $x\in \bbfR^d $ we use the estimate from Theorem \ref{kernel th} as follows
\begin{equation*}
   \vf_\sigma ^{-1}(x,t) \big| \se^{-tH}w_0(x)\big| \leqslant C\int_{\bbfR^d} G_\alpha (x-y,t)\vf_\sigma (y,t)w_0(y) \ud y.
\end{equation*}
We decompose the integral on the right-hand side according to the definition of $\vf_\sigma $ and we estimate each term separately. 
Substituting $y=zt^{\nicefrac{1}{\alpha}}$ and using the fact that $\vf_\sigma (x,t) \leqslant 2t^{\nicefrac{\sigma}{\alpha}}|x|^{-\sigma}$ if $|x|\leqslant t^{\nicefrac{1}{\alpha}}$ we obtain
\begin{equation*}
    \begin{split}
     I_1(x,t)&\equiv C\int_{|y|\leqslant t^{\nicefrac{1}{\alpha}}} G_\alpha (x-y,t) \left( \frac{t^{\nicefrac{1}{\alpha}}}{|y|} \right)^{\sigma }w_0(y) \ud y\\
     &=Ct^{-\nicefrac{\sigma }{\alpha}}\int_{|z|\leqslant 1 }G\bigg(\frac{x}{t^{\nicefrac{1}{\alpha}}}-z,t\bigg)|z|^{-2\sigma} \left|t^{\nicefrac{1}{\alpha}}z\right|^{\sigma }w_0\left(t^{\nicefrac{1}{\alpha}}z\right) \ud z .
     \end{split}
\end{equation*}
Hence, 
\begin{equation*}
    t^{\nicefrac{\sigma }{\alpha}}\sup_{x\in \bbfR^d }I_1(x,t) \rightarrow 0 \quad \textrm{as} \quad t\rightarrow \infty
\end{equation*}
follows by the Lebesgue dominated convergence theorem, because $G\left(\frac{x}{t^{\nicefrac{1}{\alpha}}}-z, 1\right)$ is bounded and the function $|z|^{-2\sigma }$ is integrable for $|z|\leqslant 1$. 
By the assumption imposed on $w_0$, given $\ve >0$ we may choose $t$ so large that
\begin{equation*}
   \sup_{|y|\geqslant \sqrt{t} }|y|^\sigma w_0(y)<\ve .
\end{equation*}

Now, using the inequality $1\leqslant \left( \frac{t^{\nicefrac{1}{\alpha}}}{|y|} \right)^\sigma $ for $|y|\geqslant t^{\nicefrac{1}{\alpha}}$, we obtain 
\begin{equation*}
    \begin{split} 
     I_2(x,t)&\equiv \int_{|y|\geqslant t^{\nicefrac{1}{\alpha}}} G_\alpha (x-y,t) w_0(y) \ud y 
     \leqslant t^{-\nicefrac{\sigma }{\alpha}}\int_{|y|\geqslant t^{\nicefrac{1}{\alpha}}} G_\alpha (x-y,t)|y|^{\sigma }w_0(y) \ud y \\
     &\leqslant \ve t^{-\nicefrac{\sigma }{\alpha}}\int_{|y|\geqslant t^{\nicefrac{1}{\alpha}}}G_\alpha (x-y, t) \ud y .
     \end{split}
\end{equation*}
Since $\int_{\bbfR^d } G(x-y,t )\ud y=1$ for all $t>0$, $x\in \bbfR^d $ and since $\ve >0$ is arbitrary, we get 
\begin{equation*}
     t^{\nicefrac{\sigma }{\alpha}} \sup_{x\in \bbfR^d} I_2(x,t) \rightarrow 0 \quad \textrm{as} \quad t\rightarrow \infty .
\end{equation*}
\end{proof}

Let us define the weighted $L^q(\R^d)$-norm as follows
\begin{equation*}
      \|f\|_{q,\vf_\sigma (t)}=\bigg( \int_{\bbfR^d} |f(x)\vf_\sigma ^{-1} (x,t)|^q \vf_\sigma ^2 (x,t) \ud x \bigg) ^{\frac{1}{q}} \quad \textrm{ for each}    \quad 1\leqslant q <\infty ,
\end{equation*}      
     and     
\begin{equation*}
    \|f\|_{\infty, \vf_\sigma (t)}=\sup_{x\in \bbfR^d} \vf_\sigma ^{-1}(x,t)|f(x)| \quad {\rm for }\quad q=\infty .
\end{equation*}
Note, that in particular for $q=2$, the norm $\| \cdot \|_{2, \vf_\sigma (t)} $ coincides with the usual $L^2$-norm on $\bbfR^d $. 

\begin{proposition}[hypercontractivity estimates]\label{hyperc}
Suppose that $1\leqslant q\leqslant \infty $. Then the following inequality holds true
\begin{equation}\label{w norm}
    \|\se^{-tH}w_0\|_{q,\vf _\sigma (t)}\leqslant Ct^{-\frac{d}{\alpha}(\frac{1}{r}-\frac{1}{q})} \|w_0\|_{r,\vf_\sigma (t)}
\end{equation}
for every $1\leqslant r\leqslant q\leqslant \infty$ and all $t>0$, {where weights $\vf_\sigma$ are defined in \rf{weights}}.
\end{proposition}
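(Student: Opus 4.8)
The plan is to push the Gaussian-type kernel bound \rf{kernel} through a weight conjugation, reducing \rf{w norm} to an ultracontractivity estimate for an operator dominated by the bare fractional-heat convolution, and then to interpolate. Writing the weighted norms through the $t$-dependent measure $\ud\mu_t=\vf_\sigma^2(x,t)\dx$ one has $\|f\|_{q,\vf_\sigma(t)}=\|\vf_\sigma^{-1}(\cdot,t)f\|_{L^q(\ud\mu_t)}$, so, putting $w_0=\vf_\sigma(\cdot,t)h$ and $A_th=\vf_\sigma^{-1}(\cdot,t)\se^{-tH}\big(\vf_\sigma(\cdot,t)h\big)$, the claim \rf{w norm} is equivalent to
\begin{equation*}
\|A_th\|_{L^q(\ud\mu_t)}\le Ct^{-\frac{d}{\alpha}(1/r-1/q)}\|h\|_{L^r(\ud\mu_t)}.
\end{equation*}
The gain from this conjugation is that, by \rf{kernel}, the kernel of $A_t$ with respect to $\ud\mu_t$ is $\widetilde K_t(x,y)=\vf_\sigma^{-1}(x,t)\se^{-tH}(x,y)\vf_\sigma^{-1}(y,t)\le C\,G_\alpha(x-y,t)$: the two weights are absorbed and $A_t$ is controlled by a translation-invariant kernel read against $\ud\mu_t$.

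Next I would establish three endpoint bounds, one at each vertex of the triangle $\{(1/r,1/q):0\le 1/q\le1/r\le1\}$. The bound $A_t:L^1(\ud\mu_t)\to L^\infty(\ud\mu_t)$ is immediate from $\sup_{x,y}\widetilde K_t(x,y)\le C\|G_\alpha(\cdot,t)\|_\infty=Ct^{-d/\alpha}$. The bounds $A_t:L^1(\ud\mu_t)\to L^1(\ud\mu_t)$ and $A_t:L^\infty(\ud\mu_t)\to L^\infty(\ud\mu_t)$ both follow from Schur's test with weight one; since $G_\alpha$ is symmetric, it suffices to show, uniformly in $t>0$,
\begin{equation*}
\sup_{x\in\R^d}\int_{\R^d}G_\alpha(x-y,t)\vf_\sigma^2(y,t)\dy\le C.
\end{equation*}
Under the parabolic scaling $y=t^{1/\alpha}\eta$ one has $G_\alpha(x-y,t)=t^{-d/\alpha}G_\alpha(t^{-1/\alpha}x-\eta,1)$ and $\vf_\sigma(y,t)=1+|\eta|^{-\sigma}=\vf_\sigma(\eta,1)$, so this supremum equals the single $t$-free integral $\sup_\xi\int G_\alpha(\xi-\eta,1)(1+|\eta|^{-\sigma})^2\ud\eta$.

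The main obstacle is exactly the finiteness of this integral. The tail $|\eta|\to\infty$ is harmless, since $G_\alpha(\cdot,1)$ decays like $|\eta|^{-(d+\alpha)}$ while $(1+|\eta|^{-\sigma})^2\to1$; the delicate region is $\eta\to0$, where $(1+|\eta|^{-\sigma})^2\sim|\eta|^{-2\sigma}$ and $G_\alpha$ stays bounded, so the integral converges (uniformly in $\xi$) precisely when $2\sigma<d$. This is where the admissible range of $\sigma$ is used: the relation \rf{gamma_lambda} is invariant under $\sigma\mapsto d-\alpha-\sigma$, hence its roots pair up with sum $d-\alpha$, and the relevant root lies in $\big(0,\tfrac{d-\alpha}{2}\big)$, giving $2\sigma<d-\alpha<d$.

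Finally, with the three vertex estimates in hand --- with constants $Ct^{-d/\alpha}$, $C$ and $C$ at $(1/r,1/q)=(1,0)$, $(1,1)$ and $(0,0)$ --- Riesz--Thorin interpolation fills the whole triangle, using that $(1/r,1/q)\mapsto\log\|A_t\|_{L^r(\ud\mu_t)\to L^q(\ud\mu_t)}$ is convex. Because the exponent $\tfrac{d}{\alpha}(1/r-1/q)$ is affine in $(1/r,1/q)$, vanishes at the two diagonal vertices and equals $d/\alpha$ at $(1,0)$, the interpolated constant is exactly $Ct^{-\frac{d}{\alpha}(1/r-1/q)}$, which is \rf{w norm} after undoing the conjugation.
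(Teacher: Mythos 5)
Your proof is correct, and it takes a genuinely different route from the paper's after a common first step. Both arguments start from the kernel bound \rf{kernel} and use it to absorb the two weights, reducing everything to the free kernel $G_\alpha$ tested against $\vf_\sigma^2$: the paper does this by estimating $\|\vf_\sigma^{-1}\se^{-tH}\vf_\sigma w_0\|_{p,\vf_\sigma(t)}^p$ and pulling $\vf_\sigma(x,t)\vf_\sigma(y,t)$ out of the kernel, which is your conjugated operator $A_t$ in different clothing. From there the paper argues directly for each pair of exponents: it splits the outer integral into $|x|\geqslant t^{\nicefrac{1}{\alpha}}$ and $|x|\leqslant t^{\nicefrac{1}{\alpha}}$ using the pointwise bounds \rf{observation}, controls the far region by the $L^q\to L^p$ smoothing of the free fractional heat semigroup, and the near region by H\"older with $\|G_\alpha(t)\|_{q/(q-1)}$ together with $\int_{|x|\leqslant t^{\nicefrac{1}{\alpha}}}|x|^{-2\sigma}\dx\asymp t^{(d-2\sigma)/\alpha}$. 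You instead prove only three endpoint bounds ($L^1\to L^\infty$ from $\sup\widetilde K_t\leqslant Ct^{-d/\alpha}$; $L^1\to L^1$ and $L^\infty\to L^\infty$ by a Schur test reduced, via parabolic scaling, to one $t$-independent integral) and fill the whole triangle by Riesz--Thorin; since the target exponent $\frac{d}{\alpha}(\frac1r-\frac1q)$ is affine in $(1/r,1/q)$, the interpolation reproduces exactly the claimed power of $t$. The two proofs ultimately rest on the same analytic fact, integrability of $|x|^{-2\sigma}$ at the origin, i.e. $2\sigma<d$: the paper uses it silently in its near-region estimate, while you make it explicit and justify it via the symmetry $\sigma\mapsto d-\alpha-\sigma$ of \rf{gamma_lambda}, which selects the root in $\left(0,\frac{d-\alpha}{2}\right]$ --- indeed the weight exponent used in \cite{BGJP}, so this fills in a point the paper leaves implicit (the larger root could violate $2\sigma<d$). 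What your route buys is cleaner bookkeeping (the measure $\ud\mu_t$ makes the reduction to $A_t$ an identity, whereas the paper's displayed chain drifts between $w_0$ and $\vf_\sigma^2w_0$ and ends with the unweighted norm $\|w_0\vf_\sigma^2\|_q$) and $t$-dependence obtained automatically from scaling plus interpolation; what the paper's route buys is self-containedness --- no interpolation theorem, and all pairs $(r,q)$ handled in a single pass.
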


\begin{proof}
Using estimates of the kernel $\se^{-tH}$ we get
\begin{align*}
\left\|\vf_{\sigma} ^{-1} e^{-{tH }}\vf_{\sigma} w_0\right\|_{p,\vf_\sigma (t)}^p&=\int \limits_{\bbfR ^d}\vf_\sigma^{-p} (x,t)|e^{-{tH}}w_0(x)\vf_\sigma (x,t)|^p\vf_\sigma ^2(x,t) \ud x \\
 &=\int \limits_{\bbfR ^d}\vf_\sigma^{-p+2} (x,t)\left|\int \limits_{\bbfR ^d}e^{-{tH}}(x,y)\vf_\sigma^2 (y,t)w_0(y)\ud y\right|^p \ud x \\
&\leqslant C\int \limits_{\bbfR ^d}\vf_\sigma^{-p+2} (x,t)\left|\int \limits_{\bbfR ^d}G_\alpha (x-y, t)\vf_\sigma (x,t)\vf_\sigma ^2(y,t)w_0(y)\ud y\right|^p \ud x\\ 
&=C\int \limits_{\bbfR ^d} \vf_\sigma^2 (x,t) \left|\int \limits_{\bbfR ^d}G_\alpha (x-y,t)\vf_\sigma ^2(y,t) w_0(y)\ud y\right|^p \ud x.
\end{align*}
Observe that the weights are bounded by 
\begin{align}\label{observation}
  \vf_\sigma (x,t) &\leqslant 1 \quad \text{if}\quad |x|\geqslant t^{\nicefrac{1}{\alpha}},\\
  \nonumber
  \vf_\sigma (x,t) &\leqslant 2t^{\nicefrac{\sigma}{\alpha}}|x|^{-\sigma} \quad \text{if}\quad |x|\leqslant t^{\nicefrac{1}{\alpha}}.
\end{align} 
We split the integral into two terms 
\begin{align*}
I_1&=C\int \limits_{|x|\geqslant t^{\nicefrac{1}{\alpha}}}\vf_\sigma^2 (x,t)\int \limits_{\bbfR ^d}G_\alpha (x-y,t)\vf_\sigma  ^2(y,t)w_0 (y)\ud y|^p \ud x,\\
I_2&=C\int \limits_{|x|\leqslant t^{\nicefrac{1}{\alpha}}}\vf_\sigma ^2(x,t)\left|\int \limits_{\bbfR ^d}G_\alpha (x-y,t)\vf_\sigma  ^2(y,t)w_0(y)\ud y\right|^p \ud x,
\end{align*}
and applying \rf{observation} we get
\begin{align*} 
I_1&\leqslant C\int \limits_{|x|\geqslant t^{\nicefrac{1}{\alpha}}}\left|\int \limits_{\bbfR ^d}G_\alpha (x-y,t)\vf_\sigma  ^2(y,t)w_0(y)\ud y\right|^p \ud x\leqslant \int \limits_{\bbfR ^d}\left|\int \limits_{\bbfR ^d}G_\alpha (x-y,t)\vf_\sigma  ^2(y,t)w_0(y)\ud y\right|^p \ud x\\ 
&= C\left\| G_\alpha (t)w_0 \vf_\sigma  ^2\right\|_p ^p\ \leqslant \ C t^{-\nicefrac{d}{\alpha}(\frac{1}{q}-\frac{1}{p})p}\left\| w_0\vf_\sigma  ^2\right\|_q ^p
\end{align*}
 using estimates of the semigroup generated by the  fractional Laplacian. Moreover, applying the Young inequality and estimates of this  semigroup we arrive at
\begin{align*}
I_2&=C\int\limits_{|x|\leqslant t^{\nicefrac{1}{\alpha}}}t^{\nicefrac{\sigma}{\alpha}}|x|^{-2\sigma }\left|\int \limits_{\bbfR ^d}G_\alpha (x-y,t)\vf_\sigma ^2(y,t)w_0(y) \ud y\right|^p \ud x\\
&\leqslant Ct^{\nicefrac{\sigma}{\alpha}}\int \limits_{|x|\leqslant t^{\nicefrac{1}{\alpha}}}|x|^{-2\sigma}\|G_\alpha (t)w_0\vf_\sigma  ^2\|^p_{\infty }\ud x \\
&\leqslant Ct^{\nicefrac{\sigma}{\alpha}}\int \limits_ {|x|\leqslant  t^{\nicefrac{1}{\alpha}}}|x|^{-2\sigma }\| G_\alpha (t)\| ^p_{\frac{q}{q-1}}\| w_0\vf_\sigma  ^2\| ^p_q\ud x\leqslant Ct^{\nicefrac{2\sigma}{\alpha} -\frac{dp}{\alpha q}}\| w_0\vf_\sigma  ^2\| _q ^p\int \limits_{|x|\leqslant t^{\nicefrac{1}{\alpha}}}|x|^{-2\sigma }\ud x \\
&=C t^{-\frac{d}{\alpha}(\frac{1}{q}-\frac{1}{p})p}\left\| w_0\vf_\sigma  ^2\right\| _q ^p,
\end{align*}
which completes the proof.
\end{proof}

\subsection{Nonlinear equation}

Now we are in a position to state the result on convergence of solutions towards the singular steady state.
\begin{theorem}\label{half l}
   {Let $u=u(x,t)$ be a solution to problem \rf{f_lap}--\rf{u_0} constructed in Theorem \ref{glo2} with exponent $p$ satisfying assumption \rf{pJL} and $\sigma \in (0,d-\alpha)$ fulfill equation \rf{gamma_lambda}.} Assume that there exist constants $b>0$ and $\ell \in \big(\sigma , d-\sigma \big)$ such that 
\begin{equation*}
     u_\infty (x)-b|x|^{-\ell} \leqslant u_0(x)
\end{equation*}
  for all $|x|\geqslant 1$. Then
\begin{align}
     &\label{rel i} \sup_{|x|\leqslant t^{\nicefrac{1}{\alpha}} }|x|^\sigma \big( u_\infty (x) - u(x,t) \big) \leqslant Ct^{-\nicefrac{\ell -\sigma }{\alpha}} \\
     \intertext{and}
     &\label{rel ii} \sup_{|x|\geqslant t^{\nicefrac{1}{\alpha}}} \big( u_\infty (x) -u(x,t) \big) \leqslant Ct^{-\nicefrac{\ell }{\alpha}}.
\end{align}
hold for a constant $C>0$ and all $t\geqslant 1$.
\end{theorem}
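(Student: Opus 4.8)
The plan is to deduce everything from the linear upper bound \rf{v inf and e H 0} together with the weighted kernel estimate of Theorem \ref{w half l}. The nonlinear difficulty has already been absorbed into the comparison principle underlying \rf{v inf and e H 0}, so the argument is essentially a transcription of a linear estimate into the two requested regions.

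First I would set $w_0 = u_\infty - u_0$, which is nonnegative by \rf{ssub}, and recall from \rf{v inf and e H 0} that the solution $u$ from Theorem \ref{glo2} obeys $0 \leqslant u_\infty(x) - u(x,t) \leqslant \se^{-tH}w_0(x)$. Hence it suffices to bound $\se^{-tH}w_0$, and for this I would verify that $w_0$ meets the hypotheses of Theorem \ref{w half l}. For $|x| \leqslant 1$ the inequality $0 \leqslant u_0 \leqslant u_\infty$ gives the near-origin control $w_0(x) \leqslant u_\infty(x) = s(\alpha,d,p)|x|^{-\alpha/(p-1)}$; for $|x| \geqslant 1$ the standing hypothesis $u_\infty(x) - b|x|^{-\ell} \leqslant u_0(x)$ gives exactly $w_0(x) \leqslant b|x|^{-\ell}$. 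Taking a common constant in the two bounds and using $\ell \in (\sigma,d-\sigma)$ together with the exponent $p$ forced by \rf{pJL} and \rf{gamma_lambda}, Theorem \ref{w half l} then yields
\begin{equation*}
\sup_{x \in \bbfR^d} \vf_\sigma^{-1}(x,t)\,\big|\se^{-tH}w_0(x)\big| \leqslant C t^{-\ell/\alpha} \qquad \textrm{for all } t \geqslant 1 .
\end{equation*}

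The remaining step is to convert this single weighted estimate into the two region-dependent bounds by splitting $\bbfR^d$ at $|x| = t^{\nicefrac{1}{\alpha}}$ and using the elementary two-sided behavior of the weight recorded in \rf{observation}. On $|x| \geqslant t^{\nicefrac{1}{\alpha}}$ one has $t^{\nicefrac{\sigma}{\alpha}}|x|^{-\sigma} \leqslant 1$, hence $\vf_\sigma(x,t) \leqslant 2$, and the displayed estimate gives $u_\infty(x) - u(x,t) \leqslant \se^{-tH}w_0(x) \leqslant 2C t^{-\ell/\alpha}$, which is \rf{rel ii}. On $|x| \leqslant t^{\nicefrac{1}{\alpha}}$ one has $\vf_\sigma(x,t) \leqslant 2 t^{\nicefrac{\sigma}{\alpha}}|x|^{-\sigma}$, so that $u_\infty(x) - u(x,t) \leqslant 2C t^{\nicefrac{\sigma}{\alpha}}|x|^{-\sigma} t^{-\ell/\alpha}$; multiplying through by $|x|^\sigma$ produces $|x|^\sigma\big(u_\infty(x) - u(x,t)\big) \leqslant 2C t^{-(\ell-\sigma)/\alpha}$, which is \rf{rel i}.

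I expect the only genuinely delicate point to be the exponent bookkeeping rather than any analytic estimate. One must make sure that every $\ell$ permitted by the hypothesis $\ell \in (\sigma,d-\sigma)$ actually lies in the window where Theorem \ref{w half l} applies, and that the near-origin singularity $|x|^{-\alpha/(p-1)}$ of $w_0$ is integrable against the fractional kernel, i.e. $p > 1 + \frac{\alpha}{d-\sigma}$. This is precisely where the relation between $\sigma$, determined through \rf{gamma_lambda}, and the exponent $\frac{\alpha}{p-1}$ of the singular solution enters, via the monotonicity of the Gamma-quotient in \rf{lap-alpha-gamma} and the subcriticality condition \rf{pJL}. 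Once these inclusions are confirmed, nothing beyond the comparison \rf{v inf and e H 0} and the weight dichotomy \rf{observation} is required.
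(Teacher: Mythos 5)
Your proof is correct and takes essentially the same route as the paper: the paper's entire proof of this theorem is the single sentence that it suffices to combine inequality \rf{v inf and e H 0} with Theorem \ref{w half l}, which is exactly your argument with the hypothesis verification for $w_0=u_\infty-u_0$ and the weight dichotomy \rf{observation} spelled out explicitly. Note also that your reading of the linear bound as $Ct^{-\ell/\alpha}$ (rather than the misprinted $Ct^{-\ell/2}$ in the statement of Theorem \ref{w half l}) is indeed what that theorem's proof delivers and what the conclusions \rf{rel i}--\rf{rel ii} require.
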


\begin{remark}
For the classical nonlinear heat equation Pol\'a\v cik  and  Yanagida  \cite[Th. 6.1]{PY} proved a pointwise convergence of solutions to the singular steady state for $x\in \bbfR^d\setminus\{0\}$. 
Moreover, Fila and Winkler \cite{FW} showed the uniform convergence of solutions to the singular one on $\bbfR^d\setminus B_r(0)$, where $B_r(0)$ is the ball centered at the origin with the  radius $r$.
\end{remark}

\begin{proof}
It suffices to use inequality \rf{v inf and e H 0} and to estimate its right-hand side by Theorem \ref{w half l}.
\end{proof}

We can improve Theorem \ref{half l} for the limit exponent  $\ell=\sigma $ as follows.
\begin{theorem}\label{mth 2}
     {Let $u=u(x,t)$ be a solution to problem \rf{f_lap}--\rf{u_0} constructed in Theorem \ref{glo2} with exponent $p$ satisfying assumption \rf{pJL} and $\sigma \in (0,d-\alpha)$ fulfill equation \rf{gamma_lambda}.} Suppose that there exists a constant $b>0$ such that
\begin{equation*}
     u_\infty (x)-b|x|^{-\sigma  }\leqslant u_0 (x),
\end{equation*}
 and, moreover,
\begin{equation*}
      \lim_{|x|\rightarrow \infty }|x|^{\sigma }\big( u_\infty (x)-u_0(x) \big) = 0.
\end{equation*}    
   Then the relations 
\begin{align*}
     &\lim_{t\rightarrow \infty }\sup_{|x|\leqslant t^{\nicefrac{1}{\alpha}}}|x|^\sigma \big( u_\infty (x)-u(x,t)\big)=0\\
     \intertext{\it and }
    &\lim_{t\rightarrow \infty }t^{\nicefrac{\sigma }{\alpha}}\sup_{|x|\geqslant t^{\nicefrac{1}{\alpha}}}\big( u_\infty (x)-u(x,t)\big) =0
\end{align*}
hold. 
\end{theorem}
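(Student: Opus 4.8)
The plan is to follow the proof of Theorem~\ref{half l} verbatim in structure, but to replace the quantitative decay estimate of Theorem~\ref{w half l} by the qualitative limit of Theorem~\ref{limit e -tH th}: at the borderline exponent $\ell=\sigma$ the power $t^{-\nicefrac{(\ell-\sigma)}{\alpha}}$ of Theorem~\ref{half l} degenerates to a constant, so one can no longer extract a rate and must instead argue convergence to zero by the dominated-convergence mechanism already built into Theorem~\ref{limit e -tH th}. Accordingly, set $w_0(x)=u_\infty(x)-u_0(x)$. Since $u$ is the solution produced by Theorem~\ref{glo2} we have $0\leqslant u_0\leqslant u_\infty$, so $w_0\geqslant0$; the hypothesis $u_\infty(x)-b|x|^{-\sigma}\leqslant u_0(x)$ rearranges to $0\leqslant|x|^\sigma w_0(x)\leqslant b$, giving $|\cdot|^\sigma w_0\in L^\infty(\bbfR^d)$, while the second hypothesis is literally $\lim_{|x|\to\infty}|x|^\sigma w_0(x)=0$. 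Thus $w_0$ meets both assumptions of Theorem~\ref{limit e -tH th}.

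Next I would combine the comparison inequality \rf{v inf and e H 0}, valid because $0\leqslant u_0\leqslant u_\infty$,
\[
0\leqslant u_\infty(x)-u(x,t)\leqslant\se^{-tH}w_0(x),
\]
with Theorem~\ref{limit e -tH th} applied to this $w_0$, obtaining the single master limit
\[
m(t):=t^{\nicefrac{\sigma}{\alpha}}\sup_{x\in\bbfR^d}\vf_\sigma^{-1}(x,t)\big(u_\infty(x)-u(x,t)\big)\longrightarrow0\qquad(t\to\infty).
\]
Everything then reduces to unfolding this weighted supremum into the two regional statements by inserting the elementary one-sided bounds on $\vf_\sigma$ recorded in \rf{observation}.

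Concretely, from the definition of $m(t)$ one has $u_\infty(x)-u(x,t)\leqslant \vf_\sigma(x,t)\,t^{-\nicefrac{\sigma}{\alpha}}m(t)$ for every $x$. On the inner region $|x|\leqslant t^{\nicefrac{1}{\alpha}}$, where $\vf_\sigma(x,t)\leqslant 2t^{\nicefrac{\sigma}{\alpha}}|x|^{-\sigma}$, this gives
\[
|x|^\sigma\big(u_\infty(x)-u(x,t)\big)\leqslant |x|^\sigma\cdot 2t^{\nicefrac{\sigma}{\alpha}}|x|^{-\sigma}\cdot t^{-\nicefrac{\sigma}{\alpha}}m(t)=2\,m(t),
\]
so the first supremum tends to $0$. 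On the outer region $|x|\geqslant t^{\nicefrac{1}{\alpha}}$, where $\vf_\sigma(x,t)$ is bounded by an absolute constant, the same substitution yields
\[
t^{\nicefrac{\sigma}{\alpha}}\big(u_\infty(x)-u(x,t)\big)\leqslant t^{\nicefrac{\sigma}{\alpha}}\cdot\vf_\sigma(x,t)\cdot t^{-\nicefrac{\sigma}{\alpha}}m(t)\leqslant C\,m(t),
\]
so the second supremum tends to $0$ as well. Both claimed limits follow.

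I do not anticipate a genuine obstacle: the analytic substance---the rescaling $y=zt^{\nicefrac{1}{\alpha}}$ and the dominated-convergence argument handling the critical weight $|z|^{-2\sigma}$---is entirely contained in Theorem~\ref{limit e -tH th}. The only points needing care are (i) confirming that the pointwise lower bound on $u_0$ is exactly what places $|\cdot|^\sigma w_0$ in $L^\infty$ globally, including near the origin, which holds because $w_0\geqslant0$ turns $w_0\leqslant b|x|^{-\sigma}$ into $|x|^\sigma w_0\leqslant b$ directly; and (ii) the correct insertion of the two weight bounds from \rf{observation} in the inner and outer regions, which is routine bookkeeping.
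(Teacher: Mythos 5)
Your proposal is correct and follows essentially the same route as the paper: the paper's proof is precisely to combine the comparison inequality \rf{v inf and e H 0} with Theorem \ref{limit e -tH th} applied to $w_0=u_\infty-u_0$. You merely make explicit the two bookkeeping steps the paper leaves implicit (verifying that the hypotheses place $|\cdot|^\sigma w_0$ in $L^\infty$ with vanishing limit at infinity, and unfolding the weighted supremum via the bounds \rf{observation} on $\vf_\sigma$ in the inner and outer regions), which is a faithful elaboration rather than a different argument.
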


\begin{remark}
A similar result for the classical case, namely for $\alpha =2$, can be found in \cite{FWY1, FKWY,FW, FWY}, where authors proved estimates from below of the $L^\infty$-norm of solutions using matched asymptotics.
\end{remark}

\begin{proof}
As in the proof of Theorem \ref{half l}, it is sufficient to use \rf{v inf and e H 0} together with Theorem \ref{limit e -tH th}, substituting $w_0(x)=u_\infty (x)- u_0(x)$.
\end{proof}

\begin{corollary}\label{small b} 
   Under the assumptions of Theorem \ref{half l} and Theorem \ref{mth 2}, respectively if, moreover,  $b$ is sufficiently small, we obtain
\begin{align}
     \label{sup u}   &\|u(\cdot ,t)\|_\infty \geqslant C t^{\frac{\ell -\sigma}{\sigma (p-1)-\alpha}} \quad \textit{if } \quad \ell \in (\sigma , d-\sigma )\\
\intertext{\it for a constant $C>0$ and all $t\geqslant 1$,  and }
     \label{sup u 2}&\lim_{t\rightarrow \infty }\|u(\cdot ,t)\|_\infty =\infty  \quad \textit{if }\quad \ell=\sigma.
\end{align}
\end{corollary}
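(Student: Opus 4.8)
The plan is to exploit that the constructed solution sits just below the singular profile $u_\infty(x)=s(\alpha,d,p)|x|^{-\alpha/(p-1)}$, while the upper bound \rf{rel i} on $u_\infty-u$ degrades only like $|x|^{-\sigma}$, a strictly weaker singularity because $\sigma>\frac{\alpha}{p-1}$. (This inequality $\sigma(p-1)-\alpha>0$ is forced: were $\sigma<\frac{\alpha}{p-1}$, then \rf{rel i} would make $u(\cdot,t)$ unbounded near the origin, contradicting $u(\cdot,t)\in L^\infty$ from Proposition \ref{MsMts}; it is also exactly the condition making the exponent in \rf{sup u} positive.) Consequently the sup of $u$ is detected not at the origin but at an intermediate radius that balances the two singular powers, and this is what I would optimize.

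Concretely, rewrite \rf{rel i} as a pointwise lower bound: for every $x$ with $|x|\le t^{1/\alpha}$,
\[
u(x,t)\ \ge\ s(\alpha,d,p)|x|^{-\frac{\alpha}{p-1}}-Ct^{-\frac{\ell-\sigma}{\alpha}}|x|^{-\sigma},
\]
where, tracing the proof of Theorem \ref{w half l}, the constant is proportional to $b$, say $C=C_0b$. Since both terms are radial, it suffices to evaluate the right-hand side at a single radius $\rho=|x|$. Put $a=\frac{\alpha}{p-1}$, $E=C_0b\,t^{-\frac{\ell-\sigma}{\alpha}}$ and $g(\rho)=s\rho^{-a}-E\rho^{-\sigma}$. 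Because $a<\sigma$, one has $g(\rho)\to-\infty$ as $\rho\searrow0$ and $g(\rho)\to0^+$ as $\rho\to\infty$, so $g$ has a unique maximizer $\rho_\ast=\big(\tfrac{\sigma E}{as}\big)^{1/(\sigma-a)}$, and an elementary one-variable computation gives
\[
g(\rho_\ast)=s\,\frac{\sigma-a}{\sigma}\,\rho_\ast^{-a}=C_1\,E^{-\frac{a}{\sigma-a}}=C_2\,b^{-\frac{a}{\sigma-a}}\,t^{\frac{\ell-\sigma}{\sigma(p-1)-\alpha}},
\]
the last equality using $\frac{a}{\sigma-a}=\frac{\alpha}{\sigma(p-1)-\alpha}$. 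Evaluating at any $x_\ast$ with $|x_\ast|=\rho_\ast$ then yields $\|u(\cdot,t)\|_\infty\ge u(x_\ast,t)\ge g(\rho_\ast)$, which is precisely \rf{sup u}.

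The one point needing care, and where the smallness of $b$ enters, is admissibility: \rf{rel i} may be invoked at $|x|=\rho_\ast$ only if $\rho_\ast\le t^{1/\alpha}$. Since $\ell>\sigma$ makes $E(t)$ decreasing and $1/(\sigma-a)>0$, the map $t\mapsto\rho_\ast(t)$ is decreasing whereas $t^{1/\alpha}$ increases, so the constraint is tightest at $t=1$, where it reads $\big(\tfrac{\sigma C_0b}{as}\big)^{1/(\sigma-a)}\le1$. Choosing $b\le\frac{as}{\sigma C_0}$ therefore guarantees $\rho_\ast(t)\le\rho_\ast(1)\le1\le t^{1/\alpha}$ for all $t\ge1$, establishing \rf{sup u} on the whole range $t\ge1$.

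For the borderline case $\ell=\sigma$ I would run the same argument with the explicit factor $Ct^{-(\ell-\sigma)/\alpha}$ replaced by $\varepsilon(t)\equiv\sup_{|x|\le t^{1/\alpha}}|x|^\sigma\big(u_\infty(x)-u(x,t)\big)$, which tends to $0$ as $t\to\infty$ by Theorem \ref{mth 2}. The identical optimization (now with $E=\varepsilon(t)$) produces $\|u(\cdot,t)\|_\infty\ge C_1\varepsilon(t)^{-a/(\sigma-a)}$, and since $-\tfrac{a}{\sigma-a}<0$ and $\varepsilon(t)\to0$, the right-hand side diverges, giving \rf{sup u 2}; admissibility $\rho_\ast(t)\le t^{1/\alpha}$ again holds for all large $t$ because $\rho_\ast\to0$. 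The whole proof is routine once \rf{rel i} and Theorem \ref{mth 2} are in hand; the only genuine obstacle I expect is the bookkeeping—tracking the $b$-proportionality of the constant in \rf{rel i} and checking the admissibility $\rho_\ast\le t^{1/\alpha}$ uniformly in $t\ge1$—which is exactly what the smallness of $b$ secures.
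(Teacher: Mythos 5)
Your proof is correct and takes essentially the same route as the paper's: both arguments convert the decay estimates on $u_\infty-u$ (in the paper, via $\se^{-tH}(u_\infty-u_0)$ and Theorems \ref{w half l}, \ref{limit e -tH th}) into a pointwise lower bound of the form $u(x,t)\geqslant s(\alpha,d,p)|x|^{-\alpha/(p-1)}-E\,|x|^{-\sigma}$ and then maximize this two-power function over the radius, obtaining a maximal value proportional to $E^{-\alpha/(\sigma(p-1)-\alpha)}$, i.e.\ $t^{(\ell-\sigma)/(\sigma(p-1)-\alpha)}$ in case \rf{sup u} and a quantity diverging as $E\to0$ in case \rf{sup u 2}. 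Your explicit verification that the maximizing radius $\rho_\ast$ stays in the admissible region $\rho_\ast\leqslant t^{\nicefrac{1}{\alpha}}$ — which is precisely where the smallness of $b$ is used — is, if anything, spelled out more carefully than in the paper, which defines the barrier $F$ piecewise on all of $\bbfR^d$ and simply asserts the location and value of its maximum.
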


\begin{proof}
Since we have inequality \rf{v inf and e H}, it suffices to prove that
\begin{equation*}
    \sup_{x\in \bbfR^d}\left(u_\infty (x)-\se^{-tH}w_0(x)\right)\geqslant C(b) t^{\frac{\ell-\sigma }{\sigma (p-1)-\alpha}}
\end{equation*} 
for $w_0=u_\infty -u_0$. Hence, inequality \rf{sup etH} in Theorem \ref{w half l} enables us to write
\begin{equation*}
    u_\infty (x)-\se^{-tH}\big( u_\infty (x)- u_0(x) \big)\geqslant u_\infty (x)-Cb \vf_\sigma (x,t)t^{-\nicefrac{\ell}{\alpha}}
\end{equation*}
for all $x\in \bbfR^d \setminus \{0\}$ and $t>0$.
Next, using the explicit form of the weights $\vf_\sigma $, we define the function
\begin{equation*}
    F(|x|,t)=u_\infty (|x|)-Cb\vf_\sigma (x,t) t^{-\nicefrac{\ell }{\alpha}}=
        \begin{cases}
         s(\alpha,d,p)^{p-1}p|x|^{-\frac{\alpha}{p-1}}-Cbt^{\nicefrac{(\sigma -\ell)}{\alpha}}|x|^{-\sigma } & {\rm for }\  |x| \leqslant t^{\nicefrac{1}{\alpha}},\\
         s(\alpha,d,p)^{p-1}p|x|^{-\frac{\alpha}{p-1}}-Cbt^{-\nicefrac{\ell }{\alpha}} & {\rm for }\  |x|> t^{\nicefrac{1}{\alpha}}.
      \end{cases}
\end{equation*} 
An easy computation shows that the function $F$ attains its maximum at 
\begin{equation*}          
    |x|=C(b ) t^{ \nicefrac{(\sigma - \ell)}{\alpha}\frac{p-1}{\sigma (p-1)-\alpha}}, 
\end{equation*}    
and this is equal to
\begin{equation*}
   \max_{x\in \bbfR^d }F(|x|,t)=C(b) t^{\frac{\ell-\sigma }{\sigma (p-1)-\alpha}}
\end{equation*}   
for some constant $C(b)\geqslant 0$. Hence, we get \rf{sup u}.

To obtain \rf{sup u 2}, we use the result from Theorem \ref{limit e -tH th}. It follows from \rf{limit e -tH} that for every $\ve >0$ there exists $T>0$ such that 
\begin{equation*}
     \big| \se ^{-tH}w_0 (x) \big| \leqslant\ve \vf_\sigma  (x,t) t^{-\nicefrac{\sigma }{\alpha}}
\end{equation*}
for all $x \in \bbfR^d \setminus \{0\}$ and $t>T$.
Hence, by \rf{v inf and e H}, we have
\begin{equation*}
      u_\infty (x)-\se^{-tH}\big( u_\infty (x)- u_0(x) \big)\geqslant u_\infty (x)-C \ve \vf_\sigma (x,t)t^{-\nicefrac{\sigma }{\alpha}}.
\end{equation*} 
Now,  using again the explicit form of the weights $\vf_\sigma $, we consider the function 
\begin{equation*}
    G(|x|,t)=v_\infty (|x|)-Cb\vf_\sigma (x,t) t^{-\nicefrac{\sigma }{\alpha}}
    = \begin{cases}
         s(\alpha,d,p)^{p-1}p|x|^{-\frac{\alpha}{p-1}}-\ve |x|^{-\sigma } & {\rm for }\  |x| \leqslant t^{\nicefrac{1}{\alpha}},\\
         s(\alpha,d,p)^{p-1}p|x|^{-\frac{\alpha}{p-1}}-\ve t^{-\nicefrac{\sigma }{\alpha}} & {\rm for }\  |x|> t^{\nicefrac{1}{\alpha}}.
       \end{cases}
\end{equation*}
Elementary computations give us that the function $G$ attains its maximum at
\begin{equation*}          
    |x|=c \ve^{ -\frac{p-1}{\sigma (p-1)-\alpha}}
\end{equation*}    
and 
\begin{equation*}
   \max_{x\in \bbfR^d }G(|x|,t)=C\ve^{-\frac{\alpha}{\sigma (p-1)-\alpha}}
\end{equation*}   
for some constant $C\geqslant 0$. Since $\sigma>\frac{\alpha}{(p-1)}$, we see that the maximum of the function $G$ diverges to infinity if $\ve $ tends to zero. This completes the proof of \rf{sup u 2}.
\end{proof}   

Our next goal is to prove the asymptotic stability of the singular solution  $u_\infty $ in the Lebesgue space $L^2 (\bbfR^d)$.

\begin{theorem}\label{stab.2}
{Let $u=u(x,t)$ be a solution to problem \rf{f_lap}--\rf{u_0} with exponent $p$ satisfying assumption \rf{pJL} and $\sigma \in (0,d-\alpha)$ fulfill equation \rf{gamma_lambda}.}
\begin{itemize}
    \item[i)]  Suppose that $u_\infty -u_0 \in L^1(\bbfR^d )$ and $|\cdot |^{-\sigma } (u_\infty -u_0)\in L^1(\bbfR^d)$. Then 
\begin{equation}\label{1:  L1}
    \| u_\infty  - u( t) \|_2 \leqslant Ct^{-\frac{d}{2\alpha}}\| u_\infty  -u_0  \|_1 + Ct^{-\frac{d-2\sigma }{2\alpha}}\| |\cdot |^{-\sigma }(u_\infty -u_0 )\|_1 .
\end{equation}    
    \item[ii)] Suppose that $u_\infty -u_0 \in L^2 (\bbfR^d )$. Then
\begin{equation*}
    \lim_{t \rightarrow \infty } \|u_\infty -u(t) \|_2 =0.  
\end{equation*}
\end{itemize}
\end{theorem}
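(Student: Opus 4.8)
The plan is to reduce both parts to a single bound on $\|\se^{-tH}w_0\|_2$ for the nonnegative datum $w_0=u_\infty-u_0$. Indeed, by the comparison inequality \rf{v inf and e H 0} we have $0\leqslant u_\infty(x)-u(x,t)\leqslant \se^{-tH}w_0(x)$, whence $\|u_\infty-u(t)\|_2\leqslant\|\se^{-tH}w_0\|_2$, and everything comes down to controlling the right-hand side. The crucial observation, already recorded just after the definition of the weighted norms, is that $\|\cdot\|_{2,\vf_\sigma(t)}$ coincides with the ordinary $L^2$-norm; this is what lets me feed the hypercontractivity estimate of Proposition \ref{hyperc} directly into $L^2$ bounds.

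For part i) I would apply Proposition \ref{hyperc} with $r=1$ and $q=2$, which gives
\[
\|\se^{-tH}w_0\|_2=\|\se^{-tH}w_0\|_{2,\vf_\sigma(t)}\leqslant Ct^{-\frac{d}{2\alpha}}\|w_0\|_{1,\vf_\sigma(t)}.
\]
It then suffices to unwind the weighted $L^1$-norm using the explicit form \rf{weights} of $\vf_\sigma$: since $\|w_0\|_{1,\vf_\sigma(t)}=\int|w_0|\vf_\sigma(x,t)\dx=\|w_0\|_1+t^{\nicefrac{\sigma}{\alpha}}\|\,|\cdot|^{-\sigma}w_0\|_1$, multiplying through by $t^{-d/(2\alpha)}$ and noting that $-\frac{d}{2\alpha}+\frac{\sigma}{\alpha}=-\frac{d-2\sigma}{2\alpha}$ reproduces exactly the two terms in the asserted bound \rf{1:  L1}, with $w_0=u_\infty-u_0$. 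This part is thus a direct computation once the hypercontractivity estimate is in hand.

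For part ii) the same route stalls: taking $r=q=2$ in Proposition \ref{hyperc} only yields the uniform bound $\|\se^{-tH}w_0\|_2\leqslant C\|w_0\|_2$, carrying no time decay, so part i) cannot be invoked for a general $L^2$ datum. I would instead argue by density. Given $\ve>0$, approximate the nonnegative $w_0\in L^2$ in the $L^2$-norm by a function $v$ that is bounded and supported in an annulus, for instance $v=\min\{w_0,n\}\un_{\{1/n\leqslant|x|\leqslant n\}}$ with $n$ large; such $v$ lies in $L^1$ and satisfies $|\cdot|^{-\sigma}v\in L^1$, hence meets the hypotheses of part i). Writing $\|\se^{-tH}w_0\|_2\leqslant\|\se^{-tH}(w_0-v)\|_2+\|\se^{-tH}v\|_2\leqslant C\|w_0-v\|_2+\|\se^{-tH}v\|_2$ and using that the second term tends to $0$ as $t\to\infty$ by part i), one obtains $\limsup_{t\to\infty}\|\se^{-tH}w_0\|_2\leqslant C\ve$; letting $\ve\searrow0$ closes the argument.

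The only genuinely nontrivial point is part ii), precisely because the hypercontractivity estimate degenerates to a constant at the diagonal endpoint $r=q=2$ and so yields no decay by itself. The remedy is the density argument above, which leans on the uniform $L^2$-boundedness of $\se^{-tH}$ (the $r=q=2$ case of Proposition \ref{hyperc}) together with the decay already established for the better-behaved data of part i); this is standard, but it is where the actual content of part ii) resides.
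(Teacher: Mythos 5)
Your proposal is correct and follows essentially the same route as the paper: part i) is exactly the comparison inequality \rf{v inf and e H 0} combined with Proposition \ref{hyperc} for $(r,q)=(1,2)$ and unwinding the weighted $L^1$-norm into the two terms of \rf{1:  L1}, and part ii) is the same density-plus-triangle-inequality argument resting on the uniform $L^2$ bound from the case $r=q=2$ and the decay of part i) for the approximant. The only (immaterial) difference is the choice of approximant: the paper takes $\psi\in C_c^\infty(\bbfR^d)$, which already satisfies the hypotheses of part i) since $|x|^{-\sigma}$ is locally integrable, whereas you use an annular truncation of $w_0$.
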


\begin{proof}[Proof of Theorem \ref{stab.2} i)]
According to estimates \rf{v inf and e H 0} it is enough to estimate the $L^2$-norm of the expression $\se^{-tH}w_0$ for every $w_0$ satisfying two conditions: $w_0\in L^1 (\bbfR^d )$ and $| \cdot |^{-\sigma }w_0 \in L^1 (\bbfR^d )$. Applying \rf{w norm} with $q=2$, $r=1$, and using the definition of the functions $\vf_\sigma (x,t)$, we may write
\begin{equation*}
     \begin{split}
     \|\se^{-tH} w_0\|_2 &\leqslant Ct^{-\frac{d}{2\alpha}}\|w_0\|_{1, \vf_\sigma (t)} =Ct^{-\frac{d-2\sigma }{2\alpha}}\int_{|x|\leqslant t^{\nicefrac{1}{\alpha}}}w_0(x)|x|^{-\sigma }\ud x\\
                                   &+Ct^{-\frac{d}{2\alpha}}\int_{|x|\geqslant t^{\nicefrac{1}{\alpha}}}w_0(x)\ud x  \leqslant Ct^{-\frac{d-2\sigma }{2\alpha}}\| w_0 |\cdot |^{-\sigma }\|_1 + Ct^{-\frac{d}{2\alpha}}\| w_0\|_1.
     \end{split}
\end{equation*} 
This establishes formula \rf{1: L1}.    
\end{proof}

\begin{proof}[Proof of Theorem \ref{stab.2} ii)]
Again, by inequalities \rf{v inf and e H 0},  we only need to show that  
\begin{equation*}
     \lim_{t\rightarrow \infty } \|\se^{-tH}w_0\|_2 =0
\end{equation*}
for each $w_0 \in L^2 (\bbfR^d)$. Hence, for every $\ve >0$ we choose 
$\psi \in C_c^\infty ( \bbfR^d )$ such that $\|w_0 -\psi\|_2<\ve $. 
Using  the triangle inequality first and next estimate \rf{w norm} with $q=2$ and $r=2$, we obtain
\begin{equation*}
     \begin{split}
     \| \se^{-tH}w_0 \|_2 &\leqslant \|\se^{-tH}(w_0 -\psi )\|_2 + \| \se^{-tH} \psi \|_2 \\
     &\leqslant C\ve + \| \se^{-tH} \psi \|_2.
     \end{split}
\end{equation*} 
Since the second term on the right-hand side converges to zero as $t\rightarrow \infty $ by the first part of Theorem \ref{stab.2}, we get
\begin{equation*}
     \limsup_{t\rightarrow \infty }\| \se^{-tH}w_0\|_2 \leqslant C\ve.
\end{equation*}     
This completes the proof of Theorem \ref {stab.2} ii), because $\ve >0$ can be arbitrarily small.  
\end{proof}
\subsection{Decay of solutions}

We prove an asymptotic result for solutions considered in Theorem \ref{glo2} 
\begin{theorem}\label{zero_conv}
Let $u$ be a solution of problem \rf{f_lap}--\rf{u_0} with $u_0\in L^1_{\vf_\sigma(t)} (\bbfR^d)$ satisfying the assumptions of Theorem \ref{glo2}. Then
\begin{equation*}
  \lim_{t\to \infty } \| u(t)\|_{q,\vf_\sigma(t)} = 0 
\end{equation*}
for each $1\leqslant q\leqslant \infty$ holds.
\end{theorem}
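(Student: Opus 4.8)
The plan is to reduce the nonlinear assertion to the decay of the linear Hardy semigroup $\se^{-tH}$ already analysed in this section, and then to read off the convergence from the hypercontractivity estimate \rf{w norm} and from Theorem \ref{limit e -tH th}.

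First I would bound $u$ from above by $\se^{-tH}u_0$. By Theorem \ref{glo2} the solution satisfies $0\leqslant u(x,t)\leqslant u_\infty(x)$, so that $u^{p-1}\leqslant u_\infty^{p-1}=s(\alpha,d,p)^{p-1}|x|^{-\alpha}$ pointwise. Setting $v=\se^{-tH}u_0-u$ and using that $\se^{-tH}u_0$ solves the linearized equation $w_t=-(-\Delta)^{\nicefrac{\alpha}{2}}w+s(\alpha,d,p)^{p-1}p\,|x|^{-\alpha}w$ while $u$ solves \rf{f_lap} (with $|u|^{p-1}u=u^p$ since $u\geqslant 0$), a direct computation gives
\begin{equation*}
v_t=-(-\Delta)^{\nicefrac{\alpha}{2}}v+s(\alpha,d,p)^{p-1}p\,|x|^{-\alpha}v + u\big(s(\alpha,d,p)^{p-1}p\,|x|^{-\alpha}-u^{p-1}\big),
\end{equation*}
with $v(\cdot,0)=0$. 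The source term is nonnegative, because $s(\alpha,d,p)^{p-1}p\,|x|^{-\alpha}-u^{p-1}\geqslant(p-1)s(\alpha,d,p)^{p-1}|x|^{-\alpha}\geqslant 0$. Since the kernel of $\se^{-tH}$ is nonnegative by \rf{kernel}, Duhamel's formula yields $v\geqslant 0$, hence $0\leqslant u(x,t)\leqslant \se^{-tH}u_0(x)$. As the weighted norm $\|\cdot\|_{q,\vf_\sigma(t)}$ is monotone with respect to the modulus, this reduces the theorem to proving $\lim_{t\to\infty}\|\se^{-tH}u_0\|_{q,\vf_\sigma(t)}=0$ for $1\leqslant q\leqslant\infty$.

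For the endpoint $q=\infty$ I would invoke Theorem \ref{limit e -tH th} with $w_0=u_0$ (or, if the pointwise decay hypothesis there is not immediate from $u_0\leqslant u_\infty$, argue directly from the kernel bound \rf{kernel} together with $u_0\in L^1_{\vf_\sigma(t)}(\bbfR^d)$ by dominated convergence), the factor $\vf_\sigma^{-1}$ suppressing the admissible singularity of $u_0$ at the origin; this gives $\|\se^{-tH}u_0\|_{\infty,\vf_\sigma(t)}\to 0$. For large finite $q$ I would use \rf{w norm} with $r=1$,
\begin{equation*}
\|\se^{-tH}u_0\|_{q,\vf_\sigma(t)}\leqslant Ct^{-\frac{d}{\alpha}\left(1-\frac1q\right)}\|u_0\|_{1,\vf_\sigma(t)}=Ct^{-\frac{d}{\alpha}\left(1-\frac1q\right)}\Big(\|u_0\|_1+t^{\nicefrac{\sigma}{\alpha}}\big\||\cdot|^{-\sigma}u_0\big\|_1\Big),
\end{equation*}
which tends to zero whenever $q>\frac{d}{d-\sigma}$. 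The intermediate values $q\in\big(\frac{d}{d-\sigma},\infty\big)$ are then filled in by interpolating these two decaying endpoints at fixed $t$ (the norms $\|\,\cdot\,\|_{q,\vf_\sigma(t)}$ are $L^q$-norms of $\,\cdot\,\vf_\sigma^{-1}$ against the measure $\vf_\sigma^2\dx$, so they are log-convex in $1/q$), while boundedness in between is guaranteed by the case $r=q$ of \rf{w norm}.

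The delicate part, which I expect to be the main obstacle, is the range of small $q$ (down to $q=1$), where the time-growing part $t^{\nicefrac{\sigma}{\alpha}}|x|^{-\sigma}$ of the weight is not absorbed by the smoothing of the semigroup and the crude bound above fails. I would treat it exactly as in the proof of Theorem \ref{stab.2} i): split $\bbfR^d$ into $\{|x|\leqslant t^{\nicefrac1\alpha}\}$ and $\{|x|\geqslant t^{\nicefrac1\alpha}\}$, use $\vf_\sigma\leqslant 2t^{\nicefrac{\sigma}{\alpha}}|x|^{-\sigma}$ on the inner region and $\vf_\sigma\leqslant 2$ on the outer one, so that the weight's growth near the origin is compensated by the extra gain $t^{-\nicefrac{(d-2\sigma)}{\alpha}}$ furnished by the fractional heat kernel there, splitting the bound into a term controlled by $\||\cdot|^{-\sigma}u_0\|_1$ and a term controlled by $\|u_0\|_1$. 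Here the constraints $\sigma\in(0,d-\alpha)$ and the two-sided integrability encoded in $u_0\in L^1_{\vf_\sigma(t)}(\bbfR^d)$ are what should force both contributions to carry a strictly negative power of $t$; for the borderline exponents I would fall back on a density argument in the spirit of the proof of Theorem \ref{stab.2} ii), approximating $u_0$ by functions that are smooth and vanish near the origin.
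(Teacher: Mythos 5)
Your main line --- bound $u$ pointwise by a linear Hardy semigroup applied to $u_0$, then invoke the hypercontractivity estimate \rf{w norm} with $r=1$ --- is exactly the paper's proof. The only real difference is which operator you compare with: the paper uses Proposition \ref{glo3} to get $u<\delta u_\infty$, hence $u^{p-1}<(\delta s(\alpha,d,p))^{p-1}|x|^{-\alpha}$, and compares with $\se^{-tH_\delta}$ where $H_\delta=-(-\Delta)^{\nicefrac{\alpha}{2}}+(\delta s(\alpha,d,p))^{p-1}|x|^{-\alpha}$ (so it needs only $(\delta s)^{p-1}\leqslant (2\pi)^\alpha/c_\alpha$), while you use the full linearization $H$, which requires \rf{pJL}; the comparison itself (nonnegative source plus positivity of the kernel) is the same. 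Moreover, the ``delicate part'' you flag is genuine, and --- importantly --- it is a gap in the paper's own proof as well: the paper applies \rf{w norm} with $r=1$ and stops, overlooking that $\|u_0\|_{1,\vf_\sigma(t)}=\|u_0\|_1+t^{\nicefrac{\sigma}{\alpha}}\,\||\cdot|^{-\sigma}u_0\|_1$ grows like $t^{\nicefrac{\sigma}{\alpha}}$, so the resulting bound $Ct^{-\frac{d}{\alpha}\left(1-\frac{1}{q}\right)}\|u_0\|_{1,\vf_\sigma(t)}$ tends to zero only when $q>\frac{d}{d-\sigma}$. Your bookkeeping here is more careful than the paper's.

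However, your proposed repair of the small-$q$ range cannot succeed, because the assertion is actually false at $q=1$ for every nontrivial nonnegative $u_0$. By the Duhamel formula and $u^p\geqslant 0$ one has $u(t)\geqslant \se^{-t(-\Delta)^{\nicefrac{\alpha}{2}}}u_0$ pointwise, and since the fractional heat kernel is a probability density, $\|u(t)\|_1\geqslant \|u_0\|_1$; as $\vf_\sigma\geqslant 1$, this gives $\|u(t)\|_{1,\vf_\sigma(t)}\geqslant \|u(t)\|_1\geqslant \|u_0\|_1>0$ for all $t$, so no argument --- splitting, interpolation, or density --- can produce the limit $0$ there. Concretely: the splitting you borrow from the proof of Theorem \ref{stab.2}~i) only reproduces the same constraint $q>\frac{d}{d-\sigma}$ (it works in Theorem \ref{stab.2} precisely because $q=2>\frac{d}{d-\sigma}$, as $2\sigma<d$), and a density argument is powerless against a lower bound that holds uniformly for all nontrivial data, including smooth $u_0$ vanishing near the origin. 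The honest conclusion of both your argument and the paper's is the theorem restricted to $q\in\left(\frac{d}{d-\sigma},\infty\right]$, together with boundedness of $\|u(t)\|_{q,\vf_\sigma(t)}$ for all $q$; at $q=1$ the statement fails, and for $1<q\leqslant\frac{d}{d-\sigma}$ neither proof decides it.
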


\begin{proof}
By Proposition \ref{glo3} we have $u(x,t) <\delta u_\infty (x)$ for $t>0$, hence
\begin{equation*}
  u_t =  - (-\Delta )^{\nicefrac{\alpha }{2}} u+ |u|^{p-1}u < - (-\Delta )^{\nicefrac{\alpha }{2}} u+ (\delta s(\a, d,p))^{p-1}|x|^{-\a}u .
\end{equation*}
If $(\delta s(\a, d,p))^{p-1}\leqslant \frac{(2\pi)^\a}{c_\a}$, where $c_\a$ is defined in \rf{constant_c_alpha}, we get the estimate
\begin{equation*}
  u(x,t) \leqslant e^{-tH_\delta}u_0(x),
\end{equation*}
where $H_\delta u = - (-\Delta )^{\nicefrac{\alpha }{2}} u+(\delta s(\alpha,d,p))^{p-1} |x|^{-\alpha}u$. Using Proposition \ref{hyperc} for $1\leqslant q \leqslant \infty $ and $r=1$ we obtain
\begin{equation*}
  \|u(t) \|_{q,\vf_\sigma(t)} \leqslant Ct^{-\frac{d}{\alpha}(1-\frac{1}{q})} \|u_0\|_{1,\vf_\sigma (t)},
\end{equation*}
which completes the proof of Theorem \ref{zero_conv}.
\end{proof}

\section{Complements and comments}

A sufficient condition for blowup of solutions of equation \rf{f_lap} with $p>1+\frac{\a}{d}$ 
\be
T^\frac{1}{p-1}\left\|{\rm e}^{-t(-\Delta)^{\nicefrac{\a}{2}}}u_0\right\|_\infty>C_{\a,d,p}\label{sblo}
\ee 
derived  in \cite{Su}  has been interpreted in \cite{B-bl-a} as 
\be
\|u_0\|_{M^{d(p-1)/\a}}>m_{\a,d,p}\label{mblo}
\ee 
for some $m_{\a,d,p}>0$. 
Indeed, we have  equivalence 
\be
\sup_{t>0}t^\g\left\|{\rm e}^{-t(-\Delta)^{\nicefrac{\alpha}{2}}}u\right\|_\infty<\infty\ {\rm if\ and\ only\ if\ } u\in B^{-\g\a}_{\infty,\infty}(\R^d)\label{Bes}
\ee
where $B^{-\kappa}_{\infty,\infty}$ is the homogeneous Besov space of order $-\kappa<0$. The above condition \rf{Bes} is for $u\ge 0$ equivalent to $u\in M^{\frac{d}{\a\g}}(\R^d)$, the Morrey space of order $\frac{d}{\a\g}$,  \cite[Prop. 2B)]{Lem} for $\a=2$ and a slight modification of  \cite[Sec. 4, proof of Prop. 2]{Lem2} for $\a\in(0,2)$.

These are counterparts of results in \cite[Remark 7, \, Theorem 2]{B-bl} for the classical nonlinear heat equation. 
Together with results of Section \ref{s-glo}, this  leads to the following partial \, {\em dichotomy } result, similarly as was in \cite[Corollary 11]{B-bl}

\begin{corollary}[dichotomy]\label{dich}
There exist two positive constants $c(\a,d,p)$ and $C(\a,d,p)$ such that if $p>1+\frac{\a}{d}$   then 
\begin{itemize}
\item[(i)]
 $|\!\!| u_0|\!\!|_{M^{d(p-1)/\a}_q}<c(\a,d,p)$ for some $q>1$, $q<\frac{d(p-1)}{\a}$, implies that problem \rf{f_lap}--\rf{u_0} has a~global in time, smooth  solution satisfying the time decay estimate 
 $\|u(t)\|_\infty={\mathcal O}\left(t^{-1/(p-1)}\right)$.   

\item[(ii)]
  $|\!\!| u_0|\!\!|_{M^{d(p-1)/\a}}>C(\a,d,p)$ implies that each nonnegative solution of problem \rf{f_lap}--\rf{u_0} blows up in a finite time.
\end{itemize}   
\end{corollary}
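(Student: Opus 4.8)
The plan is to read the corollary off results already in place: part (i) is the small-data theory of Proposition \ref{glo1} supplemented by the decay estimate of Remark \ref{decay-est}, while part (ii) is the blowup criterion \rf{sblo} of \cite{Su}, recast as the Morrey-norm condition \rf{mblo} through the equivalence \rf{Bes}. No fresh analysis of the equation is required; the work lies in choosing the two constants and in tracking the decay rate.

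For (i), I would let $c(\a,d,p)$ be the smallness threshold furnished by Proposition \ref{glo1}. Given $\mn u_0\mn_{M^{d(p-1)/\a}_q}<c(\a,d,p)$ for some $q$ with $1<q<\frac{d(p-1)}{\a}$ (equivalently $p>1+\frac{\a q}{d}$, which places the exponents of the Picard scheme in the admissible window $\max\{p,q\}<r<pq$), the iteration of that proof converges in the norm $\xn\cdot\xn$ and yields a global mild solution; smoothness for $t>0$ comes from the bootstrapping argument recorded at the end of that proof. The decay rate is then read off the solution space itself: the endpoint $p_2=\infty$ of the semigroup estimate \rf{polgrupa} gives $\|{\rm e}^{-t(-\Delta)^{\nicefrac{\a}{2}}}u_0\|_\infty\leqslant Ct^{-1/(p-1)}\mn u_0\mn_{M^{d(p-1)/\a}_q}$, since $\frac{d}{\a}\cdot\frac{\a}{d(p-1)}=\frac{1}{p-1}$, and the Duhamel term is controlled by the same mechanism as in \rf{nielin}; hence $\|u(t)\|_\infty=\mathcal O(t^{-1/(p-1)})$, exactly as in Remark \ref{decay-est}.

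For (ii), I would set $C(\a,d,p)=m_{\a,d,p}$, the blowup threshold of \cite{B-bl-a}. The only point is the translation of the analytic condition \rf{sblo} into the geometric one \rf{mblo}: taking $\g=\frac{1}{p-1}$ in \rf{Bes}, so that $\frac{d}{\a\g}=\frac{d(p-1)}{\a}$, the size of $\sup_{t>0}t^{1/(p-1)}\|{\rm e}^{-t(-\Delta)^{\nicefrac{\a}{2}}}u_0\|_\infty$ is comparable, for the nonnegative data at hand, to $\mn u_0\mn_{M^{d(p-1)/\a}}$. Thus $\mn u_0\mn_{M^{d(p-1)/\a}}>C(\a,d,p)$ forces \rf{sblo}, and the Fujita-type argument of \cite{Su,B-bl-a} makes every nonnegative solution blow up in finite time.

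The main obstacle, and the reason the statement is only a \emph{partial} dichotomy, is that the two criteria do not live in the same space: (i) requires smallness in $M^{d(p-1)/\a}_q$ with $q>1$, whereas (ii) triggers blowup from largeness in the weaker space $M^{d(p-1)/\a}=M^{d(p-1)/\a}_1$. Since $M^{d(p-1)/\a}_q\hookrightarrow M^{d(p-1)/\a}$ for $q>1$ but not conversely, the hypothesis of (i) cannot be restated purely in terms of $\mn\cdot\mn_{M^{d(p-1)/\a}}$, so $c(\a,d,p)$ and $C(\a,d,p)$ are not two faces of a single sharp threshold. Quantifying their separation — and showing it is controlled for large $d$, as anticipated in the introduction — is the genuinely delicate issue, which the present formulation sidesteps by establishing each half in its own natural norm.
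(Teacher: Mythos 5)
Your proposal is correct and follows essentially the same route as the paper: part (i) is exactly Proposition \ref{glo1} combined with the decay estimate of Remark \ref{decay-est} (via the endpoint case of \rf{polgrupa}), and part (ii) is the translation of Sugitani's condition \rf{sblo} into the Morrey-norm condition \rf{mblo} through the Besov--Morrey equivalence \rf{Bes} for nonnegative data, which is precisely the argument the paper sketches in the paragraph preceding the corollary. Your closing observation that the two halves live in different critical spaces ($M^{d(p-1)/\a}_q$ with $q>1$ versus $M^{d(p-1)/\a}$), so the statement is only a partial dichotomy, also matches the paper's own discussion in Remark \ref{traces}.
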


Of course, there are many interesting behaviors of solutions (and still open questions) for the initial data of intermediate size satisfying 
$$
c(\a,d,p)\leqslant |\!\!| u_0|\!\!|_{M^{d(p-1)/\a}}\leqslant C(\a,d,p),
$$ 
and/or suitable pointwise estimates comparing the initial condition  $u_0$ with the singular solution $u_\infty$.

It is of interest to compare these constants  $c(\a,d,p)$ and $C(\a,d,p)$ with the Morrey space norm $|\!\!| u_\infty|\!\!|_{M^{d(p-1)/\a}}$ of the singular stationary solution $u_\infty>0$ of \rf{f_lap} in \rf{uC}--\rf{sC}.

Sufficient conditions for local-in-time existence of solutions are intimately connected with the problem of initial traces, i.e. a characterization of $u_0$ such that $u(t)$ tends to $u_0$ weakly as $t\to 0$ for a given nonnegative local solution of equation \rf{f_lap}.
Conditions on such $u_0$'s, roughly speaking, mean that local singularities are weaker than a multiple of $|x|^{-\a/(p-1)}$ (or $\lim_{x\to x_0} |x-x_0|^{\a/(p-1)}u_0(x)\leqslant J(\a,d,p)$ for a universal constant $J(\a,d,p)>0$) so that the size of $u_0$ in $M_{\rm loc}^{d(p-1)/\a}(\R^d)$ is universally bounded.

\begin{remark}[initial traces]\label{traces} 
General results on  the existence of initial traces (i.e. $u_0$'s) for arbitrary {\em nonnegative} weak solutions of equation \rf{f_lap} ($u=u(t)$ defined on $(0,T)$)  can be inferred from  \cite{ADB,BP} using local moments like those in \cite{BKZ2} with the weight functions $(1-|x|)_+^{1+\a/2}$ used in the analysis of nonlocal problems of chemotaxis. 
In particular,   estimates analogous to those in \cite[(1.4), Prop. 4.3 on p. 380]{ADB} show that a~necessary condition for the existence of a local in time solution reads: $u_0\in M^{d(p-1)/\a}_{\rm loc}(\R^d)$, and each nonnegative solution satisfies $u(t)\in  M^{d(p-1)/\a}_{\rm loc}(\R^d)$ uniformly on $(0,T)$.  
Here, $u_0\in M^{d(p-1)/\a}_{\rm loc}(\R^d)$ means:  
$\limsup_{R\to 0,\, x\in\R^d}R^{\a/(p-1)-d} \int_{\{|y-x|<R\}}|u_0(y)|\dy<\infty$. 
Thus, $M^{d(p-1)/\a}(\R^d)$ is, in a sense,  close to be  the optimal space for the local in time solvability of problem \rf{f_lap}--\rf{u_0} in the class of nonnegative solutions. 

If in both (i) and (ii) of Corollary \ref{dich} on dichotomy there were a single functional norm $\bar \ell$ instead of those of $M^{d(p-1)/\a}_q(\R^d)$ with some $q>1$ and $M^{d(p-1)/\a}(\R^d)$ (both spaces are critical) this will be unique,   
up to equivalence (a~personal communication of Philippe Souplet). 
Such a norm is called the {\em dichotomy norm}.  
Note that, however,  problem \rf{f_lap}--\rf{u_0} is not well posed in the critical space $M^{d(p-1)/\a}(\R^d)$, 
similarly to the case of radial solutions of the parabolic-elliptic Keller-Segel system studied in \cite{Lem,BKZ2} with  $M^{d/2}(\R^d)$ data as well as  for the fractional Keller-Segel  system with  $M^{d/\a}(\R^d)$ data  in \cite{B-book}.
Namely, there is no continuity of solutions of the Cauchy problem \rf{f_lap}--\rf{u_0} with respect to the initial data in this norm, see the following Remark \ref{disc-data}. 
In view of the result in \cite{CZ}, one can barely   expect that this $\bar \ell$ would be the norm in $M^{d(p-1)/\a}(\R^d)$ based solely on $L^1_{\rm loc}$ properties of functions. 
\end{remark}

\begin{remark}[solutions may depend discontinuously on the initial data in $M^{d(p-1)/\a}$]\label{disc-data} 
 \ 
\noindent 
If the condition $\limsup_{R\to 0}R^{\a/(p-1)-d} \int_{\{|y-x|<R\}}|u_0(y)|\dy>K(\a,d,p)$ is satisfied for some $x\in\R^d$ and a constant $K(\a,d,p)\geqslant C(\a,d,p)>0$, then solutions are not continuous with respect to the initial data at $u_0$; in fact, the existence times of approximating solutions tend to $0$ when the initial data are cut: $\un_{\{|y-x|>R_n\}}u_0$, $R_n\to 0$ as $n\to \infty$.  
This can be inferred from the sufficient condition for blowup and the estimate of the existence time for solutions, see analogous arguments in \cite{BKZ2,BZ-2}. 
\end{remark}


\end{document}